\newtheorem{theorem}{Theorem}
\newtheorem{lemma}[theorem]{Lemma}
\newtheorem{claim}[theorem]{Claim}
\theoremstyle{definition}
\newtheorem{corollary}[theorem]{Corollary}
\newtheorem{example}[theorem]{Example}
\newtheorem{fact}[theorem]{Fact}
\newtheorem{definition}[theorem]{Definition}
\def\Ind{\setbox0=\hbox{$x$}\kern\wd0\hbox to 0pt{\hss$\mid$\hss}
\lower.9\ht0\hbox to 0pt{\hss$\smile$\hss}\kern\wd0}
\def\Notind{\setbox0=\hbox{$x$}\kern\wd0\hbox to 0pt{\mathchardef
\nn=12854\hss$\nn$\kern1.4\wd0\hss}\hbox to
0pt{\hss$\mid$\hss}\lower.9\ht0 \hbox to
0pt{\hss$\smile$\hss}\kern\wd0}
\def\ind{\mathop{\mathpalette\Ind{}}}
\def\nind{\mathop{\mathpalette\Notind{}}}
\title{Pseudofinite  $H$-structures and groups definable in supersimple $H$-structures}
\author{Tingxiang Zou\footnote{This author is supported by the China Scholarship Council and partially supported by ValCoMo (ANR-13-BS01-0006).} (\href{mailto:zou@math.univ-lyon1.fr}{\url{zou@math.univ-lyon1.fr}})}
\date{}
\begin{document}
\maketitle
\begin{abstract}
In this paper we explore some properties of $H$-structures which are introduced in \cite{Berenstein2016}.

We describe a construction of $H$-structures based on one-dimensional asymptotic classes which preserves pseudo-finiteness. That is, the $H$-structures we construct are ultraproducts of finite structures.

We also prove that under the assumption that the base theory is supersimple of $SU$-rank one, there are no new definable groups in $H$-structures. This improves the corresponding result in \cite{Berenstein2016}.
\end{abstract}

\section{Introduction}
$H$-structures are introduced in \cite{Berenstein2016}. They are based on a geometric theory, where algebraic closure satisfies the exchange property and $\exists^\infty$ is eliminated. When a dense and co-dense independent subset is added to a model of this theory, the resulting structure is an $H$-structure. Strongly minimal theories, supersimple $SU$-rank one theories and superrosy thorn-rank one theories with elimination of $\exists^{\infty}$ are examples of geometric theories. In these cases, the corresponding $H$-structures preserve $\omega$-stability, supersimplicity or superrosiness and the rank is either one or $\omega$.

In the following, we will recall the definition of $H$-structures and some of their main properties. 

Let $T$ be a complete geometric theory in a language $\mathcal{L}$. Let $H$ be a unary predicate and put $\mathcal{L}_H=\mathcal{L}\cup\{H\}$. Let $M\models T$; we say that $A\subseteq M$ is finite dimensional if $A\subseteq \mbox{acl}_{\mathcal{L}}(a_1,\ldots,a_n)$ for some $a_1,\ldots,a_n\in M$. For a tuple $a$ and a set of parameters $A$, we write $\mbox{dim}_{\mbox{acl}_{\mathcal{L}}}(a/A)$  as the length of a maximal $\mbox{acl}_{\mathcal{L}}$-independent subtuple of $a$ over $A$.

\begin{definition}
We say that $(M,H(M))$ is an \textsl{H-expansion of $M$} \footnote{It is just called an $H$-structure in \cite{Berenstein2016}, we add this terminology to be more precise about the base theory or the base model.} if:
\begin{enumerate}
\item
$M\models T$;
\item
$H(M)$ is an $\mbox{acl}_{\mathcal{L}}$-independent subset of $M$;
\item
(Density/coheir property) If $A\subseteq M$ is finite dimensional and $q\in S_1(A)$ is non-algebraic, there is $a\in H(M)$ such that $a\models q$;
\item
(Extension property) If $A\subseteq M$ is finite dimensional and $q\in S_1(A)$ is non-algebraic, then there is $a\in M$, $a\models q$ and $a\not\in \mbox{acl}_{\mathcal{L}}(A\cup H(M))$.
\end{enumerate}

Equivalently, we can replace density and extension properties with the following more general ones:
\begin{itemize}
\item
(Generalised density/coheir property) If $A\subseteq M$ is finite dimensional and $q\in S_n(A)$ has dimension $n$, then there is $a\in H(M)^n$ such that $a\models q$;
\item
(Generalised extension property) If $A\subseteq M$ is finite dimensional and $q\in S_n(A)$ is non-algebraic, then there is $a\in M^n$, $a\models q$ and $$\mbox{dim}_{\mbox{acl}_{\mathcal{L}}}(a/A, H(M))=\mbox{dim}_{\mbox{acl}_{\mathcal{L}}}(a/A).$$
\end{itemize}

A structure $\mathcal{M}$ is called an \textsl{$H$-structure} if it is an $H$-expansion of some model of a geometric theory.
\end{definition}

$H$-structures are closely related to lovely pairs, where, instead of an independent subset, a dense and co-dense elementary substructure is added. We recall the definition of lovely pairs in the special case that the base theory is geometric, see \cite{berenstein2010lovely}.

\begin{definition}
Let $T$ be a geometric theory in a language $\mathcal{L}$ and let $\mathcal{L}_P$ be the expansion of $\mathcal{L}$ by a unary predicate $P$. An $\mathcal{L}_P$-structure $(M,N)$ is a \textsl{lovely pair of models of $T$}, if 
\begin{enumerate}
\item
$M\models T$;
\item
$N$ is an $\mathcal{L}$-elementary submodel of $M$;
\item
(Density/coheir property) If $A\subseteq M$ is finite dimensional and $q\in S_1(A)$ is non-algebraic, there is $a\in N$ such that $a\models q$;
\item
(Extension property) If $A\subseteq M$ is finite dimensional and $q\in S_1(A)$ is non-algebraic, then there is $a\in M$, $a\models q$ and $a\not\in \mbox{acl}_{\mathcal{L}}(A\cup N)$.
\end{enumerate}
\end{definition}
\begin{fact}\cite{Berenstein2016}, \cite{berenstein2010lovely}.
Properties of $H$-structures and lovely pairs.

Let $T$ be a complete geometric theory in a language $\mathcal{L}$.
\begin{itemize}

\item
$H$-expansions of models of $T$ exist and all of them are $\mathcal{L}_H$-elementary equivalent. Let $T_{H}$ be the corresponding theory. Similarly, lovely pairs of models of $T$ exist, and all of them are $\mathcal{L}_P$-elementary equivalent. 
\item
If the geometry of $T$ is nontrivial and $T$ is strongly minimal/supersimple/superrosy of rank 1, then $T_{H}$ is $\omega$-stable/supersimple/superrosy of rank $\omega$.
\item
Let $(M,H(M))$ be an $H$-structure. Then $(M,\mbox{acl}_{\mathcal{L}}(H(M)))$ is a lovely pair.
\end{itemize}
\end{fact}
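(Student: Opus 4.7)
The plan is to treat the three bullets separately, since they require different techniques.

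For the first bullet, I would exhibit an $H$-expansion by a transfinite bookkeeping construction: starting from any $M_0\models T$, at stage $\alpha$ enumerate non-algebraic $1$-types over finite-dimensional parameters in $M_\alpha$ and, passing to elementary extensions when necessary, realize each such type $q$ twice---once by an element declared to lie in $H$ and chosen $\mathrm{acl}_{\mathcal{L}}$-independent from all previous elements of $H$ (securing density together with $\mathrm{acl}_{\mathcal{L}}$-independence of $H$), and once by an element kept outside $\mathrm{acl}_{\mathcal{L}}(A\cup H(M))$ (securing extension). For completeness of $T_H$ one axiomatizes the theory by $T$ together with density, extension, and the $\mathrm{acl}_{\mathcal{L}}$-independence scheme for $H$, and then shows any two $\kappa$-saturated models are back-and-forth equivalent: a partial $\mathcal{L}$-elementary map respecting $H$ can be extended one element at a time using exactly those schemes and saturation. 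The same recipe works for lovely pairs with $N$ in place of $H$.

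For the second bullet the idea is to reduce $\mathcal{L}_H$-types to $\mathcal{L}$-types enriched with an $H$-basis. For a tuple $b$ over a small set $B$, define $h_b$ to be a minimal $h\subseteq H(M)\cap\mathrm{acl}_{\mathcal{L}}(Bb)$ with $b\in\mathrm{acl}_{\mathcal{L}}(Bh)$; exchange ensures its existence and that its size is an invariant of the type. A routine argument then shows that $\mathrm{tp}_{\mathcal{L}_H}(b/B)$ is determined by $\mathrm{tp}_{\mathcal{L}}(bh_b/B)$ together with the statement that $h_b\subseteq H$ and no further element of $\mathrm{acl}_{\mathcal{L}}(Bb)$ belongs to $H$. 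This suggests a candidate independence notion on $T_H$, namely $\mathcal{L}$-independence of $bh_b$ from $ch_c$ over $B$ together with disjointness of $h_b$ and $h_c$; one verifies the Kim--Pillay (or thorn-forking) axioms for it and concludes that it coincides with the honest one. Stability, supersimplicity, or superrosiness then transfer, and the rank is exactly $\omega$ because each additional $H$-point in the base contributes $1$ to the rank and this can be iterated unboundedly in the finite regime.

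For the third bullet I would just verify the four axioms of a lovely pair for $N:=\mathrm{acl}_{\mathcal{L}}(H(M))$. Axiom (1) is given. For (2), Tarski--Vaught applies: a non-algebraic $1$-type over a finite $A\subseteq N$ has its parameters in $\mathrm{acl}_{\mathcal{L}}(h)$ for some finite $h\subseteq H(M)$, and the generalised density property applied over $h$ produces a realization inside $H(M)\subseteq N$; algebraic types are trivially realized since $N$ is algebraically closed. The density axiom for the pair is immediate from density of $H$, and extension follows from the extension property of the $H$-expansion together with the equality $\mathrm{acl}_{\mathcal{L}}(A\cup H(M))=\mathrm{acl}_{\mathcal{L}}(A\cup N)$.

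The main obstacle is the second bullet. The structural reduction to $H$-bases is itself easy, but proving that the resulting candidate notion coincides with forking or thorn-forking in $T_H$---and in particular that the nontriviality of the pregeometry of $T$ is precisely what forces the rank up to $\omega$ rather than allowing it to collapse---requires genuine work with indiscernible sequences inside the expansion and a Kim--Pillay style uniqueness argument. This is exactly the portion carried out in detail in \cite{Berenstein2016} and \cite{berenstein2010lovely}.
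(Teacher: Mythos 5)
The paper offers no proof of this statement: it is imported verbatim as a Fact from \cite{Berenstein2016} and \cite{berenstein2010lovely}, so there is nothing in-paper to compare against. Your outline does follow the architecture of those references --- a union-of-chains construction for existence, a back-and-forth argument for completeness of $T_H$, a reduction of $\mathcal{L}_H$-types to $\mathcal{L}$-types augmented by an $H$-basis for the preservation-of-rank results, and a Tarski--Vaught check for the third bullet. The third bullet as you wrote it is essentially complete.

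Two concrete repairs are needed. First, your definition of the $H$-basis is wrong: you require a minimal $h\subseteq H(M)$ with $b\in\mathrm{acl}_{\mathcal{L}}(Bh)$, but no such $h$ exists unless $b\in\mathrm{acl}_{\mathcal{L}}(B\cup H(M))$, which fails precisely for the generic tuples that drive the rank up to $\omega$. The correct condition (as in the paper's own Definition of $HB(b/B)$, following \cite{Berenstein2016}) is that $h$ is a smallest tuple in $H(M)$ with $b\ind_{Bh}H(M)$; correspondingly the invariant determining $\mathrm{tp}_{\mathcal{L}_H}(b/B)$ is $\mathrm{tp}_{\mathcal{L}}(b,HB(b/B)/B)$ together with $H$-independence of the tuple, not your clause that ``no further element of $\mathrm{acl}_{\mathcal{L}}(Bb)$ belongs to $H$''. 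With your definition the whole reduction of $\mathcal{L}_H$-types, and hence the Kim--Pillay step, does not get off the ground. Second, in the first bullet the density, extension and independence clauses are not literally first-order; one needs elimination of $\exists^{\infty}$ (built into ``geometric'') to replace ``$q$ is non-algebraic'' by a scheme of formulas $\exists^{>n}x\,\varphi$, and the back-and-forth is then run directly between arbitrary $H$-structures, with density and extension playing the role of saturation --- otherwise you must separately prove that sufficiently saturated models of your axiom scheme are again $H$-structures, a step your sketch omits. These are exactly the places where the cited proofs do their real work.
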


Consider the theory of pseudofinite fields. It is supersimple of $SU$-rank one. By the fact above, $H$-expansions and lovely pairs of pseudofinite fields exist. However, the proof of existence uses general model theoretic techniques such as saturated models and union of chains. It is not clear whether it is possible to have $H$-expansions or lovely pairs of pseudofinite fields that are ultraproducts of finite structures.

The answer turns out to be negative for lovely pairs.

\begin{lemma}\label{lovely-pair}
If $(K,k)$ is a lovely pair of pseudofinite fields, then it is not pseudofinite.\footnote{This was already noticed by Gareth Boxall (private communication).}
\end{lemma}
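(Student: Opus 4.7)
The plan is to assume for contradiction that $(K,k)$ is pseudofinite and write $(K,k) = \prod_{\mathcal{U}}(F_i, S_i)$ for some finite $\mathcal{L}_P$-structures $(F_i, S_i)$ and ultrafilter $\mathcal{U}$, then derive a contradiction from the structure of the finite approximations. Since $k\preceq K$ as fields, $k$ is a subfield, so the $\mathcal{L}_P$-schema asserting that $P$ is closed under the field operations holds in $(K,k)$; by {\L}o{\'s}'s theorem, $S_i$ is a subfield of $F_i$ for $\mathcal{U}$-a.e.\ $i$. The extension property yields some $a\in K\setminus k$, so $(K,k)\models\exists x\,\neg P(x)$, whence $S_i\subsetneq F_i$ a.e. Writing $F_i = \mathbb{F}_{p_i^{b_i}}$, $S_i = \mathbb{F}_{p_i^{a_i}}$, set $n_i := b_i/a_i\geq 2$.

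I would then split into cases on whether $n_i$ is bounded modulo $\mathcal{U}$. If $n_i = N$ on a set in $\mathcal{U}$ for a fixed $N$, every element of $F_i$ is a root of its characteristic polynomial as an $S_i$-linear endomorphism of $F_i$, which is monic of degree $N$ with $S_i$-coefficients; this is expressed by the $\mathcal{L}_P$-sentence $\forall x\,\exists y_0,\dots,y_{N-1}\,\bigl(\bigwedge_j P(y_j) \wedge x^N + \sum_{j<N} y_j x^j = 0\bigr)$, which is true a.e.\ and thus in $(K,k)$ by {\L}o{\'s}. So every element of $K$ is algebraic of degree $\leq N$ over $k$, contradicting the extension property (which produces $a\in K$ transcendental over $k$, applying the extension property to the non-algebraic $1$-type over $\emptyset$).

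The remaining case is $n_i\to\infty$ along $\mathcal{U}$, equivalently $|S_i|/|F_i|^{1/2}\to 0$. Here I would violate density: I want to exhibit an $\mathcal{L}$-formula $\varphi(x,\bar y)$ and parameters $\bar b_i\in F_i$ a.e.\ such that $|\varphi(F_i,\bar b_i)| \geq N_\varphi$ (the $\exists^\infty$-elimination threshold for $\varphi$ in the theory of pseudofinite fields) but $\varphi(S_i,\bar b_i) = \emptyset$; then $\bar b := [\bar b_i]_{\mathcal{U}}$ witnesses a failure of the density axiom for $\varphi$ in $(K,k)$. A natural candidate in odd residue characteristic is $\varphi(x,c) := \exists y\,(y^2 = x+c)$, whose $F_i$-solution set has size $\approx |F_i|/2$; the task then reduces to finding $c_i\in F_i$ such that the additive coset $S_i + c_i$ contains no nonzero square of $F_i$. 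An analogous Artin-Schreier formula handles residue characteristic $2$.

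The main obstacle is constructing such bad $c_i$ when $n_i\to\infty$. Since $|S_i|\ll |F_i|^{1/2}$, the Weil bound on the incomplete character sum $\sum_{x\in S_i}\chi(x+c)$ is weaker than the trivial bound $|S_i|$, so the existence of a $c$ at which this sum saturates to $-|S_i|$ must be extracted by a direct counting argument, e.g.\ a second-moment or inclusion-exclusion estimate on $\#\{c\in F_i : (S_i+c)\cap(F_i^\times)^2 = \emptyset\}$, exploiting that $|S_i|$ is far smaller than $\sqrt{|F_i|}$.
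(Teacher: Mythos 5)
Your reduction to an ultraproduct of pairs of finite fields and your Case 1 (bounded index $n_i=N$, contradicting the extension property via a degree-$N$ algebraicity schema) are fine. The genuine gap is in Case 2, exactly where you flag ``the main obstacle'': the desired $c_i$ with $(S_i+c_i)\cap(F_i^\times)^2=\emptyset$ need not exist, so no second-moment or inclusion--exclusion argument can produce it. Your own heuristic shows why: the expected count of such $c$ is about $|F_i|/2^{|S_i|}$, and the Weil-type error terms in the expansion are of size roughly $|S_i|\cdot|F_i|^{1/2}$, so positivity requires $2^{|S_i|}\ll|F_i|^{1/2}$, i.e.\ $|S_i|\lesssim n_i\log|S_i|$ --- a far stronger condition than $n_i\to\infty$. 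Worse, in the complementary regime the claim is simply false: by Katz's estimate for incomplete character sums over subfield cosets, if $c$ generates $F_i$ over $S_i$ then $|\sum_{s\in S_i}\chi(s+c)|\leq(n_i-1)|S_i|^{1/2}$, so as soon as $|S_i|>(n_i-1)^2$ every such coset contains both squares and nonsquares (and non-generating $c$ reduce to intermediate fields). Hence along an ultrafilter concentrating on $S_i=\mathbb{F}_{p_i}$, $F_i=\mathbb{F}_{p_i^{n_i}}$ with $n_i\to\infty$ but $p_i\gg n_i^2$, your one-parameter formula $\exists y\,(y^2=x+c)$ cannot witness a failure of density for any choice of $c$.

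The missing idea --- which is how the paper argues, and which makes your case split unnecessary --- is to exploit the Galois action instead of counting. Take a nontrivial $\sigma_i\in\mathrm{Gal}(F_i/S_i)$ and $a_1\neq a_2=\sigma_i(a_1)$, and use the two-parameter formula $\varphi(x;y_1,y_2):=(\exists z\,z^2=x-y_1)\wedge\neg(\exists z\,z^2=x-y_2)$. For $b\in S_i$ the automorphism $\sigma_i$ fixes $b$ and commutes with squaring, so $b-a_1$ is a square if and only if $b-a_2$ is; thus $\varphi(S_i;a_1,a_2)=\emptyset$ for purely formal reasons, with no estimates on the size of $S_i$ needed. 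Non-algebraicity of $\varphi(x;a_1,a_2)$ in $F_i$ is then verified by Lang--Weil applied to the absolutely irreducible curve $z_1^2=x-a_1,\ z_2^2=e(x-a_2)$ for a fixed nonsquare $e$. If you want to keep your architecture, you must at minimum replace the Case 2 formula by something of this Galois-twisted form; once you do, Case 1 becomes redundant as well.
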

\begin{proof}
Let $(K',k')=\prod_{i\in I}(K'_i,k'_i)/\mathcal{U}$ be a pair of pseudofinite fields with $\mbox{char}(K')=\mbox{char}(k')$ such that $k'_i\subsetneq K'_i$ are finite fields for any $i\in I$.

Suppose $\mbox{char}(K')\neq 2$. We will show that there are $a_1,a_2\in K'$ and $\varphi(x;y_1,y_2)$ in the language of rings such that $\varphi(x;a_1,a_2)$ is non-algebraic, but there is no $b\in k'$ such that $\varphi(b;a_1,a_2)$ holds. However, as $(K,k)$ is a lovely pair, the following holds in $(K,k)$: $$\forall y_1\forall y_2(\exists^{\infty}x~\varphi(x;y_1,y_2) \to \exists z\in k~\varphi(z;y_1,y_2)).$$ Therefore, $(K,k)$ is not elementary equivalent to $(K',k')$.

As $\mbox{char}(K')\neq 2$, we may assume that $\mbox{char}(K_i)\neq 2$ for all $i\in I$. For any $i\in I$ take $\sigma_i\in Gal(K'_i/k_i')$ with $\sigma_i\neq id$. Let $a_{i_1},a_{i_2}\in K'_i$ be such that $\sigma_i(a_{i_1})=a_{i_2}$ and $a_{i_1}\neq a_{i_2}$. Let $\sigma=(\sigma_i)_{i\in I}/\mathcal{U}$, $a_1:=(a_{i_1})_{i\in I}/\mathcal{U}$ and $a_2:=(a_{i_2})_{i\in I}/\mathcal{U}$. Then $a_1\neq a_2$, $\sigma(a_1)=a_2$ and $k'\subseteq \mbox{Fix}(\sigma)$. Define $$\varphi(x;y_1,y_2):=(\exists z~z^2=x-y_1)~\land~\neg(\exists z~z^2=x-y_2).$$ We claim that $\varphi(x;a_1,a_2)$ is non-algebraic in $K'$. Since $\mbox{char}(K_i')\neq 2$ for any $i\in I$, we have $\{x^2:x\in K_i'\}\subsetneq K_i'$. Let $e_i$ be such that there is no $x\in K_i'$ with $x^2=e_i$. Then by \cite[Proposition 4.3]{duret}, the ideal generated by $\{(X_1)^2-(X-a_{i_1});~(X_2)^2-e_i(X-a_{i_2})\}$ is absolutely prime and does not contain $X-a_{i_1}$ or $X-a_{i_2}$. Let $V$ be the corresponding irreducible variety. Then $V$ has dimension 1; by the Lang-Weil estimate $|V\cap K_i'|\approx |K_i'|$. We claim that $K_i\models \varphi(x;a_{i_1},a_{i_2})$ for any $(x_1,x_2,x)\in V\cap K_i'$ with $x\neq a_{i_2}$. Since if not, there is some $x_3$ such that $x-a_{i_2}=(x_3)^2$. As $x\neq a_{i_2}$, we have $x_3\neq 0$. Then $e_i=(\frac{x_2}{x_3})^2$, contracting that $e_i$ is not a square-root. Therefore, we can define a function $$\tau_i:(V\cap K_i')\setminus \{(x_1,x_2,a_{i_2}):x_1,x_2\in K_i'\}\to\varphi(K_i';a_{i_1},a_{i_2})$$ by $\tau_i(x_1,x_2,x):=x$. As $\mbox{char}(K_i')\neq 2$, it is easy to see that $\tau_i$ is a four-to-one function. By that $|V\cap K_i'|\approx |K_i'|$, we conclude that $$|\varphi(K_i';a_{i_1},a_{i_2})|\approx \frac{1}{4}|V\cap K_i'|.$$ Thus, $\varphi(x;a_1,a_2)$ is non-algebraic.

On the other hand, for any $b\in k'$ we have $$\exists z(z^2=b-a_1)\Longleftrightarrow\exists z(\sigma(z^2)=\sigma(b-a_1))\Longleftrightarrow\exists z( \sigma(z)^2=b-a_2)\Longleftrightarrow\exists z(z^2=b-a_2).$$ Therefore, there is no $b\in k'$ such that $\varphi(b;a_1,a_2)$ holds.

The case of $\mbox{char}(K')=2$ is similar, using cubes instead of squares (and possibly going to some finite extension of $K'$).
\end{proof}

In view of the close connection between $H$-structures and lovely pairs, we might expect $H$-expansions of pseudofinite fields never to be pseudofinite. Luckily, this is not so. In fact, we can see from the proof above that the reason $(K',k')$ is not a lovely pair is the existence of a nontrivial automorphism $\sigma$ of $K'$ that fixes $k'$. In the case of $H$-expansions, instead of a subfield we only need to add a subset. Intuitively, we might be able to choose a pseudofinite set large enough such that no non-trivial automorphism can fix all the points in this set.

\begin{definition}
Let $T$ be a geometric theory in a language $\mathcal{L}$.
Let $\mathcal{M}=\prod_{i\in I}M_i/\mathcal{U}\models T$ be an infinite ultraproduct of finite structures. We call an $H$-expansion $(\mathcal{M}, H(\mathcal{M}))$ an \textsl{exact pseudofinite $H$-expansion of $\mathcal{M}$} if $(\mathcal{M}, H(\mathcal{M}))=\prod_{i\in I}(M_i,H_i)/\mathcal{U}$ with $H_i\subseteq M_i$ for all $i\in I$.
\end{definition}
\noindent\emph{Remark:} Let $\mathcal{M}=\prod_{i\in I}M_i/\mathcal{U}\models T$ be pseudofinite. Then an arbitrary pseudofinite $H$-expansion need not to be exact, since it need not be this particular ultraproduct. For example, let $\mathcal{U}$ be a nonprincipal ultrafilter on $\mathbb{N}$ and $V=\prod_{i\in \mathbb{N}}V_n/\mathcal{U}$ an ultraproduct of finite vector spaces over $\mathbb{F}_2$ such that $\lim_{n\in\mathbb{N}}\mbox{dim}(V_n)=\infty$. It is easy to build an exact pseudofinite $H$-expansion of $V$ by choosing an independent set $H_n\subseteq V_n$ for each $n\in\mathbb{N}$ with $\displaystyle{\lim_{n\in\mathbb{N}}\mbox{dim}(H_n)=\lim_{n\in\mathbb{N}}\mbox{codim}(H_n)=\infty}$ and put $(V,H)=\prod_{n\in\mathbb{N}}(V_n,H_n)/\mathcal{U}$. Let $H'\subseteq V$ be a countable independent set of $V$. Then $(V,H')$ is pseudofinite $H$-expansion of $V$ as $(V,H')\equiv (V,H)$. But $(V,H')$ is not $\aleph_1$-saturated, hence cannot be an ultraproduct over non-principal ultrafilters. Thus $(V,H')$ is not exact.


Let $\mathcal{C}$ be a one-dimensional asymptotic class and $\mathcal{M}$ be an infinite ultraproduct of members of $\mathcal{C}$. In section \ref{H-fields} we show that exact pseudofinite $H$-expansions of $\mathcal{M}$ always exist. In particular, pseudofinite $H$-expansions of pseudofinite fields do exist.

Section \ref{groups} deals with definable groups in $H$-structures. Our motivation is to classify definable groups in $H$-expansions of pseudofinite fields. There are some results about definable groups in $H$-structures when the base theory is superstable in \cite{Berenstein2016} using the group configuration theorem. The problem to generalise these results is that in simple (even in supersimple) theories, there is no nice version of the group configuration theorem available in general. However, pseudofinite fields are exceptional: the group configuration theorem for pseudofinite fields has essentially been given in \cite{hrushovski1994groups}. We can easily deduce that definable groups in $H$-expansions of pseudofinite fields are virtually isogenous to algebraic groups.

However, this is not very satisfactory. It is of course the best one could get when one compares definable groups in $H$-expansions of pseudofinite fields with algebraic groups. But as has been noticed in \cite{Berenstein2016}, ``since the geometry on $H$ is trivial, we expected adding $H$ should not introduce new definable groups". With the help of the group chunk theorem in simple theories (see Fact \ref{prop-gpchunk}) we give a more satisfactory answer, namely, there are no new definable groups in $H$-structures when the base theory is supersimple of $SU$-rank one. Notably, Eleftheriou also got a same classification of definable groups in $H$-structures in the setting of o-minimal theories using the similar strategy, see \cite[Theorem 1.2]{Eleftheriou2018character}.

\section{Pseudofinite $H$-structures}\label{H-fields}
This section deals with pseudofinite $H$-structures built from one-dimensional asymptotic classes. 

One-dimensional asymptotic classes are classes of finite structures with a nicely behaved dimension and counting measure on all families of uniformly definable sets. They are introduced in \cite{macpherson2008one} inspired by the class of finite fields.

We recall the definition of a one-dimensional asymptotic class and list some examples. 

\begin{definition}\label{lem-counting}
Fix a language $\mathcal{L}$. A class $\mathcal{C}$ of finite $\mathcal{L}$-structures is called a \textsl{one-dimensional asymptotic class} if the following holds for every $m\in\mathbb{N}$ and every formula $\varphi(x;\bar{y})$ with $|\bar{y}|=m$:
\begin{enumerate}
\item
There is a positive constant $C$ and a finite set $E\subseteq \mathbb{R}^{>0}$ such that for any $M\in\mathcal{C}$ and $\bar{b}\in M^m$, either $|\varphi(M;\bar{b})|<C$ or there is $\mu\in E$ with $$||\varphi(M;\bar{b})|-\mu |M||<C\cdot |M|^{\frac{1}{2}}.$$
\item
For every $\mu\in E$ there is an $\mathcal{L}$-formula $\varphi_{\mu}(\bar{y})$ such that for any $M\in\mathcal{C}$ and $\bar{b}\in M^m$
\begin{center}
$M\models \varphi_{\mu}(\bar{b})$ if and only if $||\varphi(M;\bar{b})|-\mu |M||<C\cdot |M|^{\frac{1}{2}}$.
\end{center}
\end{enumerate}
\end{definition}

Examples of one-dimensional asymptotic classes are:
\begin{itemize}
\item
The class of finite fields.
\item
The class of finite-dimensional vector spaces over finite fields.
\item
The class of finite cyclic groups.
\end{itemize}

\begin{fact}(\cite[Lemma 4.1]{macpherson2008one})
Let $\mathcal{C}$ be a one-dimensional asymptotic class and $M$ an infinite ultraproduct of members of $\mathcal{C}$. Then $Th(M)$ is supersimple of $SU$-rank 1.
\end{fact}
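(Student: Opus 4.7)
The plan is to lift the asymptotic counting from $\mathcal{C}$ to the ultraproduct $M$ via {\L}o\'s's theorem, show this turns $\mbox{acl}_{\mathcal{L}}$ on $M$ into a pregeometry, and then verify the resulting independence relation satisfies the Kim--Pillay axioms for a simple theory of $SU$-rank $1$.

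First, for any $\mathcal{L}$-formula $\varphi(x;\bar{y})$ with $|x|=1$ and any $\bar{b}\in M^m$, clause (2) of Definition \ref{lem-counting} together with {\L}o\'s's theorem picks out a unique $\mu_\varphi(\bar{b})\in E$ whenever $\varphi(M;\bar{b})$ is infinite, and otherwise $|\varphi(M;\bar{b})|<C$. This gives elimination of $\exists^\infty$ in $M$ and a $\{0,1\}$-valued counting dimension on one-variable definable sets. Extending the count to tuples by iterating over projections and using the uniform definability of $\mu_\varphi$ to isolate the generic fiber produces a dimension $d$ satisfying the Fubini identity $d(ab/C) = d(a/C) + d(b/aC)$. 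Additivity then immediately yields exchange for $\mbox{acl}_{\mathcal{L}}$ on $M$: if $a\in \mbox{acl}_{\mathcal{L}}(Ab)\setminus \mbox{acl}_{\mathcal{L}}(A)$ then $d(b/Aa) = d(ab/A) - d(a/A) < d(b/A)$ forces $b\in \mbox{acl}_{\mathcal{L}}(Aa)$, so $T=\mbox{Th}(M)$ is geometric and $d$ coincides with $\dim_{\mbox{acl}_{\mathcal{L}}}$.

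Define $a\ind_A B$ by $\dim_{\mbox{acl}_{\mathcal{L}}}(a/A) = \dim_{\mbox{acl}_{\mathcal{L}}}(a/AB)$. The pregeometry delivers monotonicity, symmetry, transitivity, finite character, local character, and extension essentially for free; the one remaining Kim--Pillay axiom, the independence theorem over models, I would establish by showing that types over algebraically closed sets are stationary: two non-forking extensions must agree on every $\varphi(x;\bar{b})$ because $\mu_\varphi(\bar{b})$ is $\mathcal{L}$-definable from $\bar{b}$ and hence invariant under any $\mathcal{L}$-automorphism fixing $\mbox{acl}_{\mathcal{L}}(A)$. By Kim--Pillay, $\mbox{Th}(M)$ is then simple with $\ind$ as non-forking. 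Since $d\le 1$ on singletons and $\dim_{\mbox{acl}_{\mathcal{L}}}$ is $\mathbb{N}$-valued, every non-algebraic one-type has $SU$-rank exactly $1$, so $\mbox{Th}(M)$ is supersimple of $SU$-rank $1$.

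The hard part will be the stationarity/independence-theorem step. The pregeometric axioms are routine once Fubini is in place, but matching the candidate $\ind$ with genuine non-forking requires extracting enough homogeneity from the uniform definability of the $\mu_\varphi$, and this is where one genuinely uses both clauses of the asymptotic-class definition rather than merely the counting estimate.
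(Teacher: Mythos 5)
This statement is quoted by the paper from \cite[Lemma 4.1]{macpherson2008one} without proof, so your attempt can only be measured against the standard argument there. Most of your outline (lifting the counting via {\L}o\'s, Fubini for the definable measure, exchange, and hence a pregeometry with a candidate independence relation) is sound and is indeed how the dimension theory is set up. But the step you yourself flag as the hard one contains a genuine error: you propose to get the independence theorem by proving that types over algebraically closed sets are \emph{stationary}, arguing that two non-forking extensions must agree on every $\varphi(x;\bar b)$ because $\mu_\varphi(\bar b)$ is definable. This is a non sequitur --- knowing that $\varphi(M;\bar b)$ has positive normalized measure tells you nothing about whether a given non-algebraic type contains $\varphi(x;\bar b)$ or $\neg\varphi(x;\bar b)$, since both can have positive measure. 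Worse, the intermediate claim is simply false for the motivating example: in a pseudofinite field, over $\mbox{acl}_{\mathcal{L}}(a)$ the formulas ``$x-a$ is a square'' and ``$x-a$ is not a square'' both define non-algebraic sets, so the generic $1$-type has at least two distinct non-forking extensions. Stationarity of independence over algebraically closed sets is essentially a stability phenomenon and cannot hold here.

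The argument that actually works bypasses Kim--Pillay entirely. One shows directly that for a $1$-variable formula, dividing coincides with algebraicity: if $\varphi(x,\bar a)$ is non-algebraic then $|\varphi(M,\bar a)|\geq\mu|M|$ for some fixed $\mu>0$ (uniformly, by clause (2) of the definition and {\L}o\'s), and the same holds for every $\varphi(x,\bar a_i)$ along an indiscernible sequence; if the family $\{\varphi(x,\bar a_i)\}_{i<n}$ were $k$-inconsistent, each point of $M$ would lie in at most $k-1$ of these sets, giving $n\mu|M|\leq(k-1)|M|$ for all $n$, a contradiction. Hence $D(x{=}x,\varphi,k)\leq 1$ for every $\varphi$ and $k$, which yields simplicity and $SU$-rank at most $1$ for $1$-types in one stroke; supersimplicity for $n$-types then follows from the Lascar inequalities. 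If you want to salvage your Kim--Pillay route, the independence theorem itself must be proved by a counting argument of this kind (intersecting two positive-measure definable sets over an independent base), not by stationarity.
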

In particular, the theory of any infinite ultraproduct of members of a one-dimensional asymptotic class is a model of a geometric theory, and we will show that it always allows an exact pseudofinite $H$-expansion.

\begin{definition}
Let $\mathcal{C}$ be a one-dimensional asymptotic class in a language $\mathcal{L}$. Let $\varphi(x;\bar{y})$ ($\bar{y}$ non-empty) be an $\mathcal{L}$-formula and $E\subseteq \mathbb{R}^{>0}$ be as in Definition \ref{lem-counting}. Put $$\psi_{\varphi}(\bar{y}):=\bigvee_{\mu\in E}\varphi_{\mu}(\bar{y}).$$ For a structure $M\in\mathcal{C}$ and a subset $X\subseteq M$, we say \textsl{$X$ covers $\psi_{\varphi}(\bar{y})$ in $M$} if the following holds: $$\bigcup_{x\in X}\varphi(x;M^{|\bar{y}|})\supseteq \psi_{\varphi}(M^{|\bar{y}|}).$$

Let $\phi(x;\bar{y})$ be a formula. Suppose $\phi(x;\bar{y})$ is algebraic ($\bar{y}$ can be empty) over any $\bar{y}$. For a structure $M\in\mathcal{C}$ and a linearly-ordered subset $X\subseteq M$, we say that \textsl{$X$ avoids $\phi(x;\bar{y})$ in $M$} if there is no $x,x_1,\ldots, x_{|\bar{y}|}\in X^{|\bar{y}|+1}$ such that $x>\max\{x_1,\ldots,x_{|\bar{y}|}\}$ and $$M\models \phi(x;x_1,\ldots,x_{|\bar{y}|}).$$
\end{definition}

Let $\mathcal{M}$ be an infinite ultraproduct of members of $\mathcal{C}$. For any $\varphi(x,\bar{y})$ and $\bar{a}\in\mathcal{M}^{|\bar{y}|}$, if $\mathcal{M}\models \psi_{\varphi}(\bar{a})$, then there is $\mu\in E$ such that $|\varphi(\mathcal{M},\bar{a})|\approx\mu|\mathcal{M}|.$ As $\mu>0$ and $\mathcal{M}$ is infinite, we get $\varphi(\mathcal{M},\bar{a})$ is infinite. On the other hand, if $\mathcal{M}\models \neg\psi_{\varphi}(\bar{a})$, then by the definition one-dimensional asymptotic class, there must be some $C\in\mathbb{N}$ such that $|\varphi(\mathcal{M},\bar{a})|\leq C$. Therefore, $\psi_{\varphi}(\bar{y})$ defines the set of $\bar{a}$ such that $\varphi(x,\bar{a})$ is non-algebraic in any infinite ultraproduct of members of $\mathcal{C}$.

\begin{lemma}\label{lem-cover}
Let $\mathcal{C}$ be a one-dimensional asymptotic class, $\Gamma$ be a finite set of algebraic formulas of the form $\phi(x;\bar{z})$ ($\bar{z}$ could be empty) and $\Delta$ any finite set of formulas of the form $\varphi(x;\bar{y})$ (the length of $\bar{y}$ can vary and $\bar{y}$ is non-empty). Then there are $N_{\Delta,\Gamma}\in\mathbb{N}$ and $C_{\Delta,\Gamma}\in \mathbb{R}^{>0}$ such that the following holds:

For any $M\in\mathcal{C}$ with $|M|\geq N_{\Delta,\Gamma}$, there exists $(H_{\Delta,\Gamma}(M),\leq )$ with $H_{\Delta,\Gamma}(M)\subseteq M$ and $\left|H_{\Delta,\Gamma}(M)\right|\leq C_{\Delta,\Gamma}\cdot\log |M|$ such that for any $\varphi(x;\bar{y})\in \Delta$ and $\phi(x;\bar{z})\in\Gamma$, we have $H_{\Delta,\Gamma}(M)$ covers $\psi_{\varphi}(\bar{y})$ and avoids $\phi(x;\bar{z})$ in $M$. 

In particular, $|M|\geq N_{\Delta,\Gamma}$ should imply the equation (\ref{hm}) and the inequality (\ref{star}), which are defined throughout the proof.
\end{lemma}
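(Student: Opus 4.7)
The plan is a probabilistic existence argument. Fix parameters: let $m$ be the maximum of $|\bar{y}|$ over $\varphi\in\Delta$, let $k$ be the maximum of $|\bar{z}|$ over $\phi\in\Gamma$, and let $\mu_{\min}>0$ be the minimum of the positive values appearing in the finitely many sets $E_\varphi$ attached to $\varphi\in\Delta$. Because each $\phi\in\Gamma$ is algebraic over every parameter tuple, the dichotomy in Definition \ref{lem-counting}(1) forces the first (bounded) alternative in all sufficiently large $M\in\mathcal{C}$, yielding a uniform bound $|\phi(M;\bar{z})|<K$ for some $K=K(\Gamma)$. I then set $t:=\lceil C_{\Delta,\Gamma}\log|M|\rceil$ with $C_{\Delta,\Gamma}$ to be chosen, sample $a_1,\ldots,a_t$ uniformly and independently from $M$, and order the sample by sampling index. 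The claim to prove is that for $|M|\geq N_{\Delta,\Gamma}$, with positive probability this sample both covers every $\psi_\varphi$ and avoids every $\phi$; any realization then serves as $H_{\Delta,\Gamma}(M)$.

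For coverage, fix $\varphi\in\Delta$ and $\bar{b}$ with $M\models\psi_\varphi(\bar{b})$. The asymptotic estimate gives $|\varphi(M;\bar{b})|\geq(\mu_{\min}/2)|M|$ once $|M|\geq(2C/\mu_{\min})^2$, so each $a_i$ independently misses $\varphi(x;\bar{b})$ with probability $\leq 1-\mu_{\min}/2$, and no $a_i$ covers $\bar{b}$ with probability $\leq(1-\mu_{\min}/2)^t$. A union bound over the at most $|\Delta|\cdot|M|^m$ pairs $(\varphi,\bar{b})$ bounds coverage failure by $|\Delta|\cdot|M|^m(1-\mu_{\min}/2)^t$, which drops below $1/3$ once $C_{\Delta,\Gamma}$ exceeds a threshold depending only on $m$, $\mu_{\min}$, and $|\Delta|$. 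This fixing of $C_{\Delta,\Gamma}$ is (roughly) the role of equation (hm).

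For avoidance, fix $\phi\in\Gamma$ and an ordered $(k+1)$-tuple of sample indices $(i_0,i_1,\ldots,i_k)$ with $i_0$ the maximum; conditioning on $a_{i_1},\ldots,a_{i_k}$ leaves at most $K$ valid choices for $a_{i_0}$, so $\Pr[\phi(a_{i_0};a_{i_1},\ldots,a_{i_k})]\leq K/|M|$. A union bound over $\phi$ and the $\leq t^{k+1}$ ordered tuples bounds avoidance failure by $|\Gamma|\cdot K\cdot t^{k+1}/|M|$, which is $O((\log|M|)^{k+1}/|M|)$ and hence $<1/3$ for $|M|\geq N_{\Delta,\Gamma}$; this polylogarithmic-over-$|M|$ estimate is presumably the inequality (star). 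Combining, total failure probability is $<2/3<1$, proving existence; collisions among the $a_i$ occur with probability $O((\log|M|)^2/|M|)$ and are absorbed into the failure budget (or handled by resampling duplicates). The main conceptual obstacle is to keep all thresholds and constants uniform across $M\in\mathcal{C}$, which is precisely what the uniformity clause in Definition \ref{lem-counting}, namely the uniform $C$, finite $E$ and definability of $\varphi_\mu$, buys us.
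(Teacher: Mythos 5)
Your argument is correct, but it takes a genuinely different route from the paper's. The paper runs a deterministic greedy set-cover construction: it builds $H_{\Delta,\Gamma}(M)$ one element at a time, at each stage first deleting the set $L^i_{j+1}$ of elements that are $\Gamma$-algebraic over the points already chosen (so avoidance holds by construction, the order being the order of selection) and then picking, from what remains, the element covering the largest number of still-uncovered parameter tuples; the counting axiom guarantees each step covers at least a $\mu/2$-fraction of the remaining tuples, so $O(\log|M|)$ steps suffice. You instead sample $O(\log |M|)$ points uniformly at random and run two union bounds, one over the at most $|\Delta|\cdot|M|^m$ parameter tuples for coverage and one over the at most $|\Gamma|\cdot t^{k+1}$ ordered index tuples for avoidance. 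Both arguments rest on exactly the same two uniformity inputs --- the lower bound $|\varphi(M;\bar b)|\geq \mu\cdot|M|$ on the $\psi_\varphi$-locus for all large $M\in\mathcal{C}$, and a uniform finite bound for the algebraic formulas in $\Gamma$, which in either proof needs the small observation that algebraicity of $\phi$ forces $\psi_\phi(M^{|\bar z|})=\emptyset$ once $|M|$ is large --- and they yield constants of the same shape $\sim \ell_0/(-\log(1-\mu/2))$. What the greedy version buys is an explicit construction and the literal displays (\ref{hm}) and (\ref{star}) that the lemma statement promises and that the proof of the existence theorem reuses verbatim to bound $|\mbox{clos}_i(H_i\cup A')|\leq\frac{\mu}{2}|M_i|$; your version is shorter and avoids the nested induction over $\Delta$, and since it delivers the same bound $|H_{\Delta,\Gamma}(M)|\leq C_{\Delta,\Gamma}\log|M|$ together with the polylogarithmic bound on the $\Gamma$-closure of $H_{\Delta,\Gamma}(M)$, the downstream application still goes through after restating those two displays in your notation. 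Your handling of collisions --- absorbing the $O((\log|M|)^2/|M|)$ probability into the failure budget so that the sampling order induces an honest linear order --- is the right fix for the one point where the random construction is more delicate than the greedy one.
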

\begin{proof}
By Definition \ref{lem-counting}, for each $\varphi(x;\bar{y})\in \Delta$ there are finitely many $\mu_{0,\varphi},\ldots,\mu_{k_{\varphi},\varphi}>0$ and $C_{\varphi}\in\mathbb{R}$, such that for any $M\in\mathcal{C}$ and $\bar{a}\in M^{|\bar{y}|}$,  $$\psi_{\varphi}(\bar{a})~~\Longrightarrow~~\bigvee_{j\leq k_{\varphi}}(||\varphi(M;\bar{a})|-\mu_{j,\varphi} \cdot |M| |<C_{\varphi}\cdot|M|^{\frac{1}{2}}).$$ Take $0<\mu<\min\{\mu_{0,\varphi},\ldots,\mu_{k_{\varphi},\varphi}:\varphi\in \Delta\}$. Let $$\mathcal{C}_\mu:=\bigcap_{\varphi\in \Delta}\{M\in\mathcal{C}:\text{ for any }\bar{a}, ~\psi_{\varphi}(\bar{a})\text{ implies }|\varphi(M;\bar{a})|\geq \mu\cdot |M|\}.$$ We claim that there is some $N\in\mathbb{N}$ such that for any $M\in\mathcal{C}$ and $|M|>N$, we have $M\in \mathcal{C}_\mu$. Otherwise, there are $\varphi(x;\bar{y})\in \Delta,\mu_{i_0,\varphi}>0$ and $\{M_i\in\mathcal{C},\bar{a}_i\in M_i^{|\bar{y}|}:i\in\mathbb{N}\}$ such that the following holds:
\begin{itemize}
\item
$\lim_{i\to\infty}|M_i|=\infty$;
\item
$M_i\models \varphi_{\mu_{i_0,\varphi}}(\bar{a}_i)$ for each $i\in\mathbb{N}$;
\item
$|\varphi(M_i;\bar{a}_i)|<\mu \cdot|M_i|<\mu_{i_0,\varphi}\cdot |M_i|$ for each $i\in\mathbb{N}$.
\end{itemize}
Therefore, $$\mu_{i_0,\varphi}\cdot|M_i|-|\varphi(M_i;\bar{a}_i)|>(\mu_{i_0,\varphi}-\mu)\cdot|M_i|=(\mu_{i_0,\varphi}-\mu)\cdot|M_i|^{\frac{1}{2}}\cdot|M_i|^{\frac{1}{2}}.$$
By the definition of one-dimensional asymptotic class, there is some $C_{\varphi}>0$ such that $$||\varphi(M_i;\bar{a}_i)|-\mu_{i_0,\varphi}\cdot|M_i||< C_{\varphi}\cdot |M_i|^{\frac{1}{2}}.$$ Since $\lim_{i\to\infty}(\mu_{i_0,\varphi}-\mu)\cdot|M_i|^{\frac{1}{2}}=\infty$, there is clearly a contradiction.

Assume $\Delta=\{\varphi_1(x;\bar{y}_1),\ldots,\varphi_n(x;\bar{y}_n)\}$. Fix any $M\in\mathcal{C}$ with $|M|> N$, for $1\leq i\leq n$, define inductively the following sets: $X^i_j,L^i_j,H^i_j\subseteq M$ and $Y^i_j\subseteq \psi_{\varphi_i}(M^{|\bar{y}_i|})$.
\begin{itemize}
\item
$Y^1_0:=\psi_{\varphi_1}(M^{|\bar{y}_1|})$;
\item
$X^1_0:=H^1_0:=L^1_0:=\emptyset$;
\end{itemize}

Suppose $Y^i_j,X^i_j,H^i_j,L^i_j$ are defined. There are two cases.
\begin{itemize}
\item
If $Y^i_j=\emptyset$ and $i<n$, define
\begin{itemize}
\item
$Y^{i+1}_0:=\psi_{\varphi_{i+1}}(M^{|\bar{y}_{i+1}|})$;
\item
$X^{i+1}_0:=L^{i+1}_0:=\emptyset$;
\item
$H^{i+1}_0:=H^{i}_j$.
\end{itemize}
\item
If $Y^i_j\neq\emptyset$, define
\begin{itemize}
\item
$L^i_{j+1}:=\bigcup_{\phi(x;\bar{z})\in\Gamma}\{a\in M: \exists\bar{z}\in (H^i_j)^{|\bar{z}|},M\models\phi(a;\bar{z})\}\cup\bigcup_{\phi'(x)\in\Gamma}\phi'(M)$.
\item
$X^i_{j+1}:=M\setminus(H^i_j\cup L^i_{j+1})$.
\item
Choose an element $h^i_{j+1}$ in $X^i_{j+1}$ such that $\varphi_i(h^i_{j+1};Y^i_j)$ has the maximal cardinality among $\{\varphi_i(a;Y^i_j):a\in X^i_{j+1}\}$.
\item
$H^i_{j+1}:=H^i_j\cup\{h^i_{j+1}\}$ and $Y^i_{j+1}=Y^i_j\setminus \varphi_i(h^i_{j+1};Y^i_j)$.
\end{itemize}
\end{itemize}

The construction stops either when $Y^n_j$ is empty, that is $H^i_j$ covers $\psi_{\varphi_i}(\bar{y}_i)$ for any $1\leq i\leq n$, or when $Y^i_j\neq\emptyset$ and $X^i_{j+1}=\emptyset$ for some $1\leq i\leq n$ and $j\in\mathbb{N}$.

Let $Y^1_0,\ldots,Y^i_j$ be a maximal sequence of the construction. Define $H_{\Delta,\Gamma}(M):=H^i_j$ if $i=n$ and $Y^i_j=\emptyset$.

\begin{claim}\label{claim-cover}
There is $N_{\Delta,\Gamma}\in\mathbb{N}$ such that if $M\in\mathcal{C}$ and $|M|\geq N_{\Delta,\Gamma}$, then $H_{\Delta,\Gamma}(M)$ is always defined.
\end{claim}

\begin{proof}
Suppose $|M|>N$ and $M\in\mathcal{C}$. We first estimate the size of $Y^i_{j+1}$ in terms of $Y^i_j$ when the latter is not empty during the construction of $\{H^i_j,Y^i_j,L^i_j,X^i_j:i\leq n,j\geq 0\}$.

Suppose all $\phi(x;\bar{z})\in\Gamma$ have no more than $C$-many solutions over any parameter $\bar{z}$ ($\bar{z}$ can be empty). Let $C_\Gamma:=C\cdot|\Gamma|$ and $k_0:=\max\{|\bar{z}|:\phi(x;\bar{z})\in \Gamma\}$. Then 
$|L^i_{j+1}|\leq C_{\Gamma}\cdot (|H^i_j|+1)^{k_0}.$\footnote{Since we need to include the algebraic elements over $\emptyset$ defined by formulas in $\Gamma$, it can be that $H^i_j=\emptyset$ but $L^i_{j+1}\neq\emptyset$, that's the reason we put $|H^i_j|+1$ instead of $|H^i_j|$.}

Therefore, 
\begin{equation}\label{Xij}
|X^i_{j+1}|\geq |M|-C_\Gamma\cdot (|H^i_j|+1)^{k_0}-|H^i_j|.
\end{equation}

By construction, $Y^i_{j+1}=Y^i_j\setminus\{\varphi_i(h^i_{j+1};Y^i_j)\}$. As $\varphi_i(h^i_{j+1};Y^i_j)$ is maximal among $\{\varphi_i(a;Y^i_j):a\in X^i_{j+1}\}$, we get $$|\varphi_i(h^i_{j+1};Y^i_j)|\geq \frac{|\bigcup_{a\in X^i_{j+1}}\{(a,\bar{y}):\bar{y}\in\varphi_i(a;Y^i_j)\}|}{|X^i_{j+1}|}\geq\frac{|\bigcup_{a\in X^i_{j+1}}\{(a,\bar{y}):\bar{y}\in\varphi_i(a;Y^i_j)\}|}{|M|}.$$

Let $\mbox{Tot}:=\bigcup_{x\in (M\setminus H^i_j)}\{(x,\bar{y}):\bar{y}\in\varphi_i(x;Y^i_j)\}$, then $$\bigcup_{a\in X^i_{j+1}}\{(a,\bar{y}):\bar{y}\in\varphi_i(a;Y^i_j)\}=\mbox{Tot}\setminus \bigcup_{a\in L^i_{j+1}}\{(a,\bar{y}):\bar{y}\in\varphi_i(a;Y^i_j)\}.$$

As $M\in\mathcal{C}_{\mu}$, for each $\bar{y}\in Y^i_j$ we have $|\varphi_i(M;\bar{y})|\geq \mu \cdot|M|$. And by the definition of $Y^i_j$, for any $\bar{y}\in Y^i_j$, if $M\models \varphi_i(a;\bar{y})$, then $a\not\in H^i_j$. Hence, $|\mbox{Tot}|\geq \mu \cdot|M|\cdot|Y^i_j|$. On the other hand, $$|\bigcup_{a\in L^i_{j+1}}\{(a,\bar{y}):\bar{y}\in\varphi_i(a;Y^i_j)\}|\leq \left|L^i_{j+1}|\cdot|Y^i_j\right|\leq C_\Gamma\cdot (|H^i_j|+1)^{k_0}\cdot |Y^i_j|.$$ Hence, $$|\varphi_i(h^i_{j+1};Y^i_j)|\geq \frac{\mu \cdot|M|\cdot|Y^i_j|-C_\Gamma\cdot (|H^i_j|+1)^{k_0}\cdot |Y^i_j| }{|M|}=\left(\mu-\frac{C_\Gamma\cdot (|H^i_j|+1)^{k_0}}{|M|}\right)|Y^i_j|.$$
Let $\ell_0:=\max\{|\bar{y}_i|:1\leq i\leq n\}$. Define 
\begin{equation}\label{hm}
h_{M}:=\lceil\frac{\ell_0\cdot\log |M|}{-\log(1-\mu/2)}\rceil+1.
\end{equation}
Then there is some $N_{\mu/2}$ such that whenever $|M|\geq N_{\mu/2}$, we have
\begin{equation}\label{star}
\quad\frac{C_\Gamma\cdot (n\cdot h_{M}+\ell_0)^{k_0}}{|M|}\leq \frac{\mu}{2}.
\end{equation}
In particular, we have \begin{equation}\label{eq4}
\frac{C_\Gamma\cdot (n\cdot h_{M}+1)^{k_0}}{|M|}\leq \frac{\mu}{2}.
\end{equation}
Therefore, when $|H^i_j|\leq n\cdot h_{M}$, we have $|\varphi_i(h^i_{j+1};Y^i_{j})|\geq \frac{\mu}{2}|Y^i_j|$, and hence, $$|Y^i_{j+1}|=|Y^i_j|-|\varphi_i(h^i_{j+1};Y^i_{j})|\leq\left(1-\frac{\mu}{2}\right)|Y^i_j|.$$ Consequently, $$|Y^i_{j+1}|\leq\left(1-\frac{\mu}{2}\right)|Y^i_j| \leq \left(1-\frac{\mu}{2}\right)^2|Y^i_{j-1}|\leq\cdots\leq \left(1-\frac{\mu}{2}\right)^{j+1}|Y^i_0|\leq \left(1-\frac{\mu}{2}\right)^{j+1}\cdot |M|^{\ell_0}.$$

There is some $N_{\Delta,\Gamma}>\max\{N_{\mu/2},N\}$ such that whenever $|M|>N_{\Delta,\Gamma}$, we have $(1-\frac{\mu}{2})\cdot |M|>n\cdot h_{M}$. Fix some $M\in\mathcal{C}$ with $|M|>N_{\Delta,\Gamma}$ and let $$Y^1_0,\ldots,Y^1_{t_1};\cdots,;Y^i_0,\ldots,Y^i_{t_i}$$ be a maximal sequence. We claim that for each $i'\leq i$, if $|H^{i'}_{t_{i'}}|\leq n\cdot h_M$, then $t_{i'}\leq h_M$. Otherwise, $Y^{i'}_{h_M}$ is in the sequence. By the argument above, $|Y^{i'}_{h_{M}}|\leq (1-\frac{\mu}{2})^{h_{M}}\cdot |M|^{\ell_0}.$ By calculation, we have $$k>\frac{\ell_0\cdot \log |M|}{-\log(1-\mu/2)}\quad \Longrightarrow\quad \left(1-\frac{\mu}{2}\right)^{k}\cdot|M|^{\ell_0}<1.$$  Hence, $Y^{i_0}_{h_{M}}=\emptyset$. We conclude $t_{i_0}\leq h_{M}$. Therefore, $t_1\leq h_M$ and by induction, for each $1\leq i'\leq n$, we have $|H^{i'}_{t_{i'}}|=\sum_{1\leq j\leq i'} t_j\leq i'\cdot h_M$. Now we can see that $|H^i_{t_i}|\leq n\cdot h_M$.

Consider the set $X^i_{t_i+1}$. By inequality (\ref{Xij}), $$|X^i_{t_i+1}|\geq |M|-C_\Gamma\cdot (|H^i_{t_i}|+1)^{k_0}-|H^i_{t_i}|\geq |M|-C_\Gamma\cdot (n\cdot h_M+1)^{k_0}-n\cdot h_M.$$ By inequality (\ref{eq4}) and $(1-\frac{\mu}{2})\cdot |M|>n\cdot h_{M}$, we get $$|X^i_{t_i+1}|\geq |M|-\frac{\mu}{2}|M|-n\cdot h_{M}>0.$$ Hence $X^{i}_{t_i+1}\neq\emptyset$. As $Y^i_{t_i}$ is the end term of a maximal sequence, it can only be the case that $Y^i_{t_i}=\emptyset$ and $i=n$.

Therefore, if $|M|>N_{\Delta,\Gamma}$ and $M\in\mathcal{C}$, then $H_{\Delta,\Gamma}(M)$ exists and $$|H_{\Delta,\Gamma}(M)|\leq n\cdot h_{M}\leq C_{\Delta,\Gamma}\cdot \log|M|,$$ where $C_{\Delta,\Gamma}:=n\cdot \left(\lceil\frac{\ell_0}{-\log(1-\mu/2)}\rceil+1\right)$.
\end{proof}
Take any $M\in\mathcal{C}$ with $|M|\geq N_{\Delta,\Gamma}$, let $H_{\Delta,\Gamma}(M)$ as defined in Claim \ref{claim-cover} and for $h^i_j,h^t_m\in H_{\Delta,\Gamma}$, define $h^i_j\leq h^t_m$ if $i< t$ or $i=t$ and $j\leq m$. By construction we have $(H_{\Delta,\Gamma}(M),\leq)$ covers $\psi_{\varphi}(\bar{y})$ and avoids $\phi(x,\bar{y})$ in $M$ for any $\varphi\in \Delta$ and $\phi(x,\bar{y})\in\Gamma$.
\end{proof}

\begin{theorem}
Let $\mathcal{C}$ be a one-dimensional asymptotic class in a countable language $\mathcal{L}$. Let $\mathcal{M}:=\prod_{i\in I}M_i/\mathcal{U}$ be an infinite ultraproduct of members among $\mathcal{C}$. Then exact pseudofinite $H$-expansions of $\mathcal{M}$ exist.

\end{theorem}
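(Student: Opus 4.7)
The plan is to apply Lemma \ref{lem-cover} diagonally across the index set $I$ to obtain $H_i \subseteq M_i$ in each factor, and then verify that $(\mathcal{M}, H(\mathcal{M})) := \prod_{i\in I} (M_i, H_i)/\mathcal{U}$ is an $H$-expansion. Since $\mathcal{L}$ is countable, I enumerate the $\mathcal{L}$-formulas $\varphi(x;\bar{y})$ (with $\bar{y}$ nonempty) as $(\varphi_n)_{n\in\mathbb{N}}$ and the uniformly algebraic $\mathcal{L}$-formulas $\phi(x;\bar{z})$ as $(\phi_n)_{n\in\mathbb{N}}$; any partially-algebraic formula $\phi(x;\bar{z})$ becomes uniformly algebraic when conjoined with $\neg\psi_\phi(\bar{z})$, so the enumeration captures all possible algebraic relations in $\mathcal{M}$. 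Setting $\Delta_n := \{\varphi_1, \ldots, \varphi_n\}$ and $\Gamma_n := \{\phi_1, \ldots, \phi_n\}$, let $N_n := N_{\Delta_n,\Gamma_n}$ be the threshold given by Lemma \ref{lem-cover}, chosen nondecreasing in $n$. For each $i\in I$, put $n_i := \max\{n : |M_i| \geq N_n\}$ (with $n_i := 0$ and $H_i := \emptyset$ when $|M_i| < N_1$), and otherwise set $H_i := H_{\Delta_{n_i},\Gamma_{n_i}}(M_i)$ equipped with the linear order $\leq_i$ given by the lemma. Since $\mathcal{M}$ is infinite, $\{i : n_i \geq n\} \in \mathcal{U}$ for every fixed $n$.

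For the $\mbox{acl}_{\mathcal{L}}$-independence of $H(\mathcal{M})$: if some $h \in H(\mathcal{M})$ were to lie in $\mbox{acl}_{\mathcal{L}}(\bar{h})$ for finite $\bar{h} \subseteq H(\mathcal{M})\setminus\{h\}$, the exchange property of the geometric theory $T$ lets me rearrange the tuple so that the maximum element of $\{h\}\cup\bar{h}$ under the order on $H(\mathcal{M})$ induced by the $(H_i,\leq_i)$ is $\mathcal{L}$-algebraic over the rest via some formula $\phi$. The uniformly algebraic $\phi\wedge\neg\psi_\phi$ belongs to some $\Gamma_k$, and \L{}o\'s's theorem converts the avoidance property of $H_i$ (which holds for $\mathcal{U}$-most $i$) into a contradiction. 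For the density/coheir property, I first establish a formula-level version: every non-algebraic $\varphi(x,\bar{a})$ has a realization in $H(\mathcal{M})$. This follows by picking $k$ with $\varphi\in\Delta_k$ and applying the covering property of $H_i$ for $\mathcal{U}$-most $i$, together with \L{}o\'s. The density property for complete non-algebraic $1$-types over finite-dimensional parameter sets is then a routine consequence of the $\aleph_1$-saturation of the countable-language ultraproduct $(\mathcal{M},H(\mathcal{M}))$, since any finite conjunction of formulas in such a type is still non-algebraic and hence admits an $H$-realization.

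The extension property is the most delicate step, because both $\mathcal{M}$ and $\mbox{acl}_{\mathcal{L}}(A\cup H(\mathcal{M}))$ have cardinality $2^{\aleph_0}$, ruling out any naive cardinality argument. My approach: given a non-algebraic $q\in S_1(A)$ with $A=\mbox{acl}_{\mathcal{L}}(\bar{a})$, consider the $\mathcal{L}_H$-type
\begin{equation*}
p(x) := q(x) \;\cup\; \Bigl\{\forall \bar{y}\,\bigl(H(y_1)\wedge\cdots\wedge H(y_{|\bar{z}|})\wedge\neg\psi_\phi(\bar{a},\bar{y})\to\neg\phi(x;\bar{a},\bar{y})\bigr) : \phi \in \mathcal{L}\Bigr\}.
\end{equation*}
All parameters of $p$ lie in the countable set $A$, so $\aleph_1$-saturation of $(\mathcal{M},H(\mathcal{M}))$ realizes $p$ as soon as $p$ is finitely consistent, and any realization is an $a\models q$ with $a\notin\mbox{acl}_{\mathcal{L}}(A\cup H(\mathcal{M}))$. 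For finite consistency, a finite subset of $p$ involves a non-algebraic $\varphi\in q$ and finitely many $\phi_j$; in each $M_i$ the set of elements forbidden by the $\phi_j$-conditions has cardinality at most $\sum_j C_{\phi_j}\cdot |H_i|^{|\bar{z}_j|} = O((\log|M_i|)^K)$ for some $K$, while $|\varphi(M_i,\bar{a}^{(i)})|=\Omega(|M_i|)$ by the asymptotic-class property, so a suitable $a^{(i)}\in M_i$ exists for $\mathcal{U}$-most $i$ and \L{}o\'s produces the required $a\in\mathcal{M}$. The main obstacle is precisely this reformulation: quantifying over $H$ inside $p$ (instead of treating its elements as free parameters) keeps the parameter set countable, and pitting the $O(\log|M_i|)$ size of $H_i$ against the linear-in-$|M_i|$ size of non-algebraic sets is what makes the counting go through.
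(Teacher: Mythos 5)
Your proposal is correct and follows essentially the same route as the paper: the same diagonal application of Lemma \ref{lem-cover} along the enumerations $\Delta_n,\Gamma_n$, the same use of the avoidance property plus exchange for $\mbox{acl}_{\mathcal{L}}$-independence, the covering property plus \L{}o\'s and $\aleph_1$-saturation for density, and the same $(\log|M_i|)^K$-versus-$\mu|M_i|$ counting for the extension property. The only (cosmetic) difference is that for extension you encode avoidance of $\mbox{acl}_{\mathcal{L}}(A\cup H(\mathcal{M}))$ as an $\mathcal{L}_H$-type quantifying over $H$ and realize it by saturation of $(\mathcal{M},H(\mathcal{M}))$, whereas the paper builds the internal set $\mbox{clos}(H(\mathcal{M})\cup A')$ as an ultraproduct and saturates the expansion $(\mathcal{M},\mbox{clos}(H(\mathcal{M})\cup A'))$ --- both reduce to the identical counting estimate.
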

\begin{proof}
Let $\{\varphi_i(x;\bar{y}_i), i\in\mathbb{N}\}$ be a list of all formulas in $\mathcal{L}$ such that $x$ is in one variable and $\bar{y}_i\neq\emptyset$ is a tuple of variables. For $n\in\mathbb{N}$, let $\Delta_n:=\{\varphi_i(x;\bar{y}_i):i\leq n\}$.

Let $\{\xi_i(x;\bar{z}_i):i\in\mathbb{N}\}$ be a list of all formulas such that $\xi_i(x;\bar{z}_i)$ is algebraic ($\bar{z}_i$ can be empty). Let $\Gamma_n:=\{\xi_i(x;\bar{z}_i):i\leq n\}$.
 
By Lemma \ref{lem-cover}, there are $N_{\Delta_n,\Gamma_n}\in\mathbb{N}$ such that for any $M\in\mathcal{C}$ with $|M|\geq N_{\Delta_n,\Gamma_n}$ there exists $(H_{\Delta_n,\Gamma_n}(M),\leq)$ with $H_{\Delta_n,\Gamma_n}(M)\subseteq M$ such that $H_{\Delta_n,\Gamma_n}(M)$ covers $\psi_{\varphi}(\bar{y})$ and avoids $\xi(x;\bar{z})$ in $M$ for all $\varphi\in\Delta_n$ and $\xi(x,\bar{z})\in\Gamma_n$.

For any $i\in I$, let $i_n:=\max\{n:|M_i|\geq N_{\Delta_n,\Gamma_n}\}$ (set $\max\emptyset=-\infty$). Define $H_i:=H_{\Delta_{i_n},\Gamma_{i_n}}(M_i)$ if $i_n\neq\infty$; otherwise let $H_i:=\emptyset$.

\begin{claim}\label{claim-H-expansion}
$(\mathcal{M},H(\mathcal{M})):=\prod_{i\in I}(M_i,H_i)/\mathcal{U}$ is an exact pseudofinite $H$-expansion of $\mathcal{M}$.
\end{claim}
\begin{proof}
We only need to show that $(\mathcal{M},H(\mathcal{M}))$ is an $H$-expansion of $\mathcal{M}$. We verify the conditions one by one.
\begin{enumerate}
\item
$\mathcal{M}\models \mbox{Th}_{\mathcal{L}}(\mathcal{M})$: 
clear.
\item
$H(\mathcal{M})$ is an $\mbox{acl}_{\mathcal{L}}$-independent subset: Suppose, towards a contradiction, that there are $\{a_0,a_1,\ldots,a_k\}$ which are not $\mbox{acl}_\mathcal{L}$-independent. We may assume that any proper subset of $\{a_0,a_1,\ldots,a_k\}$ is an $\mbox{acl}_{\mathcal{L}}$-independent set. Suppose for $0\leq t\leq k$, each $a_t:=(a_t^i)_{i\in I}/\mathcal{U}$. Let $O:=(i_0i_1\cdots i_k)$ be an ordering of $0,1,\ldots,k$. Define $$I_{O}:=\{j\in I:(a^j_{i_0},a^j_{i_1},\ldots,a^j_{i_k})\text{ is increasing in }(H_j,\leq)\}.$$ Let $A$ be the collections of all the orderings of $0,1,\ldots,k$. Since $A$ is finite and $I=\bigcup_{O\in A}I_O$, we have exactly one $I_O\in \mathcal{U}$. We may assume that $O=(0\cdots k)$. Suppose $a_i\in\mbox{acl}_{\mathcal{L}}(\{a_0,\ldots,a_k\}\setminus \{a_i\})$. By assumption, $a_i\not\in\mbox{acl}_{\mathcal{L}}(\{a_0,\ldots,a_k\}\setminus \{a_i,a_k\})$. Since $\mbox{acl}_{\mathcal{L}}$ satisfies the exchange property, we have $a_k\in \mbox{acl}_{\mathcal{L}}(a_0,\ldots,a_{k-1})$. Let $\varphi(x;z_{0},\ldots,z_{k-1})$ witness algebraicity (i.e., $\varphi(x;z_{0},\ldots,z_{k-1})$ is algebraic and $\mathcal{M}\models \varphi(a_k;a_{0},\ldots,a_{k-1})$). By the list of all algebraic formulas, $\varphi(x;z_0,\ldots,z_{k-1})=\xi_j(x;z_0,\ldots,z_{k-1}):=\xi_j(x;\bar{z}_j)$ for some $j$.

Let $J:=\{i\in I:i_n\geq j\}=\{i\in I:|M_i|\geq N_{\Delta_j,\Gamma_j}\}$. Since $\mathcal{M}$ is infinite, $J\in\mathcal{U}$. For any $i\in J$, we have $\xi_j(x;\bar{z}_j)\in\Gamma_{i_n}$, hence $H_i$ avoids $\xi_j(x;\bar{z}_j)$. As $a^i_k>\max\{a^i_0,\ldots,a^i_{k-1}\}$ in $H_i$, by construction, the set $H_i$ avoids $\xi_j(x;\bar{z}_j)$, we get $$M_i\models \neg\xi_j(a^i_k;a^i_0,\ldots,a^i_{k-1})$$ for any $i\in J$. We conclude $\mathcal{M}\models\neg\xi_j(a_k;a_0,\ldots,a_{k-1})$, contradiction. 

\item
Density/coheir property: As $(\mathcal{M},H(\mathcal{M}))$ is pseudofinite, it is $\aleph_1$-saturated. Therefore, we only need to show that for any $a_0,\ldots,a_k\in\mathcal{M}$, if $\varphi(x;a_0,\ldots,a_k)$ is non-algebraic, then there is $h\in H(\mathcal{M})$ such that $\mathcal{M}\models \varphi(h;a_0,\ldots,a_k)$. We may assume that $\varphi(x;y_0,\ldots,y_k)=\varphi_j(x;\bar{y}_j)$. 

Let $J:=\{i\in I:i_n\geq j\}=\{i\in I:|M_i|\geq N_{\Delta_j,\Gamma_j}\}$. Then $J\in\mathcal{U}$. Note that $\varphi_j(x;\bar{y}_j)\in\Delta_{i_n}$ for any $i\in J$. Therefore $H_i$ covers $\psi_{\varphi_j}(\bar{y}_j)$ in $M_i$ for any $i\in J$. 

Suppose $a_t:=(a_t^i)_{i\in I}/\mathcal{U}$ for $0\leq t\leq k$. Let $$J':=\{i\in J:M_i\models \psi_{\varphi_j}(a^i_0,\ldots,a^i_k)\}.$$ As $\varphi_j(x;a_0,\ldots,a_k)$ is non-algebraic, $J'\in\mathcal{U}$.

For any $i\in J'$, since $H_i$ covers $\psi_{\varphi_j}(\bar{y}_j)$ in $M_i$ and $M_i\models \psi_{\varphi_j}(a^i_0,\ldots,a^i_k)$, there is some $h_i\in H_i$ such that $M_i\models \varphi_j(h_i;a^i_0,\ldots,a^i_k)$. For $i\not\in J'$, choose $h_i\in M_i$ randomly. Let $h:=(h_i)_{i\in I}/\mathcal{U}$. Then $h\in H(\mathcal{M})$ and $\mathcal{M}\models \varphi_j(h;a_0,\ldots,a_k)$, i.e., $\mathcal{M}\models \varphi(h;a_0,\ldots,a_k)$.

\item
Extension Property: Suppose $A\subseteq \mathcal{F}$ is finite dimensional. Let $A'=\{a_0,\ldots,a_k\}$ be a base of $A$. Suppose $a_t:=(a_t^i)_{i\in I}/\mathcal{U}$ for each $t\leq k$. Let $A'_i=\{a^i_0,\ldots,a^i_k\}\subseteq M_i$. Let $$\mbox{clos}_i(H_i\cup A'_i):=\bigcup_{j\leq i_n,~\bar{a}\in (H_i\cup A'_i)^{|\bar{z}_j|}}\xi_j(M_i;\bar{a}),$$ and define $\mbox{clos}(H(\mathcal{M}')\cup A'):=\prod_{i\in I}\mbox{clos}_i(H_i\cup A'_i)/\mathcal{U}$. By essentially the same argument as $\mbox{acl}_{\mathcal{L}}$-independence of $H(\mathcal{M})$, we have $$\mbox{acl}_{\mathcal{L}}(H(\mathcal{M})\cup A)\subseteq \mbox{clos}(H(\mathcal{M})\cup A').$$

By the fact that $(\mathcal{M},\mbox{clos}(H(\mathcal{M})\cup A'))$ is pseudofinite, hence $\aleph_1$-saturated, we only need to show that for any $b_0,\ldots,b_t\in A$, if $\varphi(x;b_0,\ldots,b_t)$ is non-algebraic, then there is $a\in \mathcal{M}\setminus \mbox{clos}(H(\mathcal{M})\cup A')$ such that $\mathcal{M}\models \varphi(a;b_0,\ldots,b_t)$. We may assume that $\varphi(x;y_0,\ldots,y_t)=\varphi_j(x;\bar{y}_j)$. 
Assume $b_k=(b^i_k)_{i\in I}/\mathcal{U}$ for $k\leq t$. There is some $J\in\mathcal{U}$ and $\mu>0$ such that for all $i\in J$, we have $|\varphi(M_i;b_0^i,\ldots,b_t^i)|\geq\mu\cdot |M_i|.$

Consider the size of $\mbox{clos}_i(H_i\cup A')$. We have $$|\mbox{clos}_i(H_i\cup A')|\leq C_{\Gamma_{i_n}}\cdot(|H_i\cup A'|)^{k_0},$$ where as above $\Gamma_{i_n}:=\{\xi_j(x;\bar{z}_j):j\leq i_n\}$, $k_0:=\max\{|\bar{z}_j|:j\leq i_n\}$ and $C_{\Gamma_{i_n}}:=(i_n+1)\cdot C$ with $C$ is the largest number of solutions of $\xi_j$ over parameters for $j\leq i_n$.

Let $\Delta_{i_n}:=\{\varphi_j(x;\bar{y}_j):j\leq i_n\}$ and $\ell_0:=\max\{|\bar{y}_j|:j\leq i_n\}$.
Note that there is some $J'\in\mathcal{U}$ such that for all $i\in J'$ we have $k\leq \ell_0$. Hence $$|H_i\cup A'|\leq |H_i|+k\leq |\Delta_{i_n}|\cdot h_{M_i}+\ell_0,$$ where $h_{M_i}$ is defined as the equation (\ref{hm}). By the inequality (\ref{star}), we have $$C_{\Gamma_{i_n}}\cdot(|\Delta_{i_n}|\cdot h_{M_i}+\ell_0)^{k_0}\leq \frac{\mu}{2}\cdot|M_i|.$$ Therefore, $$|\mbox{clos}_i(H_i\cup A')|\leq C_{\Gamma_{i_n}}\cdot(|H_i\cup A'|)^{k_0}\leq\frac{\mu}{2}\cdot|M_i|,$$ for all $i\in J\cap J'$.

As $|\varphi(M_i;b_0^i,\ldots,b_t^i)|\geq\mu\cdot |M_i|$, there must be some $$a_i\in \varphi(M_i;b_0^i,\ldots,b_t^i)\setminus \mbox{clos}_i(H_i\cup A')$$ for all $i\in J\cap J'$. Choose $a_i$ at random for $i\not\in J\cap J'$. Set $a:=(a_i)_{i\in I}/\mathcal{U}$, then $a\not\in \mbox{clos}(H\cup A')$ and $\mathcal{M}\models\varphi(a;b_0,\ldots,b_t)$.\qedhere
\end{enumerate}
\end{proof}

\end{proof}

\begin{corollary}
Let $\mathcal{C}$ be a one-dimensional asymptotic class in a language $\mathcal{L}$ and $\mathcal{M}$ be an infinite ultraproduct of members of $\mathcal{C}$. Suppose $\mbox{acl}_{\mathcal{L}}$ of $Th_{\mathcal{L}}(\mathcal{M})$ is non-trivial. Then the exact pseudofinite $H$-expansion $(\mathcal{M},H(\mathcal{M}))$ is a pseudofinite structure whose theory is supersimple of $SU$-rank $\omega$.
\end{corollary}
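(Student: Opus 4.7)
The plan is to assemble the corollary from three ingredients that are all already in hand by this point of the paper, so the proof is essentially a composition of previously stated results rather than a new construction.

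First, I would invoke the theorem just proved to obtain the exact pseudofinite $H$-expansion $(\mathcal{M},H(\mathcal{M}))=\prod_{i\in I}(M_i,H_i)/\mathcal{U}$. By the definition of ``exact pseudofinite $H$-expansion'', this structure is literally an ultraproduct of finite $\mathcal{L}_H$-structures, hence pseudofinite. This already takes care of the first assertion with no further work.

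Next, to identify the complete theory of $(\mathcal{M},H(\mathcal{M}))$, I would recall the fact from \cite{macpherson2008one} quoted earlier: since $\mathcal{C}$ is a one-dimensional asymptotic class and $\mathcal{M}$ is an infinite ultraproduct of members of $\mathcal{C}$, the theory $T:=\mathrm{Th}_{\mathcal{L}}(\mathcal{M})$ is supersimple of $SU$-rank $1$. In particular $T$ is a complete geometric theory, so the formalism of $H$-structures applies to it. By the theorem, $(\mathcal{M},H(\mathcal{M}))$ is an $H$-expansion of $\mathcal{M}$, hence a model of $T_H$.

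Finally, I would apply the third bullet of the fact recalled in the introduction: when the base theory $T$ is supersimple of $SU$-rank $1$ \emph{and} the underlying pregeometry $\mathrm{acl}_{\mathcal{L}}$ is non-trivial, $T_H$ is supersimple of $SU$-rank $\omega$. The non-triviality hypothesis is exactly what is assumed in the statement of the corollary, so this step goes through directly and gives the conclusion about $\mathrm{Th}((\mathcal{M},H(\mathcal{M})))$. I do not anticipate any real obstacle here, since everything required has been established earlier; the only point to verify carefully is that the exact $H$-expansion produced by the theorem really is an $H$-expansion in the sense needed to invoke the $T_H$ transfer, which is exactly the content of Claim \ref{claim-H-expansion}.
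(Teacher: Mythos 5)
Your proposal is correct and matches the paper's intended argument exactly: the paper states this corollary without a written proof precisely because it is the immediate composition of the existence theorem for exact pseudofinite $H$-expansions, the quoted fact that infinite ultraproducts of a one-dimensional asymptotic class are supersimple of $SU$-rank one, and the bullet point in the introductory Fact that $T_H$ is supersimple of $SU$-rank $\omega$ when the underlying geometry is non-trivial. Nothing further is needed.
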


\noindent\emph{Remark:} Let $\mathcal{M}:=\prod_{i\in I}M_i/\mathcal{U}$ be an infinite ultraproduct of a one-dimensional asymptotic class. We can also make the $H$-expansion $(\mathcal{M},H(\mathcal{M})):=\prod_{i\in I} (M_i,H_i)/\mathcal{U}$ satisfying $$\lim_{i\in I}\frac{\log|H_i|}{\log |M_i|}=0~~\text{ that is }~~\pmb{\delta}_{\mathcal{M}}(H(\mathcal{M}))=0,$$ that is  the pseudofinite coarse dimension of $H(\mathcal{M})$ with respect to $\mathcal{M}$ is zero.

This is because by Lemma \ref{lem-cover} we know that $|H_i|=C_{\Delta_{i_n},\Gamma_{i_n}}\cdot \log|M_i|$ where $C_{\Delta_{i_n},\Gamma_{i_n}}$ depends only on $\Delta_{i_n}$ and $\Gamma_{i_n}$. If we redefine $$i_n:=\max\{n:|M_i|>N_{\Delta_n,\Gamma_n}\text{ and }|M_i|>(C_{\Delta_{n},\Gamma_{n}})^n\},$$ we see that additionally $\pmb{\delta}_{\mathcal{M}}(H(\mathcal{M}))=0$. 

Note that for generic element $m\in M$, we have $SU_H(m)=\omega$ while $SU_H(h)<\omega$ for any element $h\in H(M)$. In a following project, together with other collaborators, we found this fact generalises to all definable sets. That is, the coarse dimension of a definable set equals to the coefficient of the $\omega$-part of the SU-rank of generic elements. We also wonder if $(M_i)_{i\in I}$ is a one-dimensional asymptotic class, then the class $(M_i,H_i)_{i\in I}$ we build in Claim \ref{claim-H-expansion} forms a multidimensional asymptotic class. We expect this should involve a more detailed treatment of definable sets in $H$-structures.

\section{Groups in H-structures}\label{groups}
This section deals with definable groups in $H$-structures when the base theory is supersimple of $SU$-rank one. We ask whether there are any new definable groups in $H$-structures.
As we said before, in \cite{Berenstein2016} the authors have partially solved the question by showing that in stable theories the connected component of an $\mathcal{L}_H$-definable group in an $H$-structure is isomorphic to some $\mathcal{L}$-definable group. We record their results here.

\begin{fact}\label{BV-gp1}(\cite[Proposition 6.5]{Berenstein2016})

Let $D$ be a group in a language $\mathcal{L}$ with $RM(D)=1$ and assume that $(D,H)$ is an $\aleph_0$-saturated $H$-structure. Let $A\subseteq D$ be finite and let $G\leq D^n$ be an $\mathcal{L}_H$-definable subgroup defined over $A$. Then $G$ is $\mathcal{L}$-definable over $A$.
\end{fact}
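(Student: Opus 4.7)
The plan is to identify $G$ with the $\mathcal{L}(A)$-locus of an $\mathcal{L}_H$-generic element of $G$, exploiting $\omega$-stability of both the base theory $T$ (since $RM(D)=1$) and of $T_H$ (where Morley rank is $\omega$). First, I would replace $G$ by its $\mathcal{L}_H$-connected component $G^0$, which has finite index in $G$ and is $\mathcal{L}_H$-definable over the same parameters. Let $g \in G^0$ be an $\mathcal{L}_H$-generic over $A$, and set $d := \dim_{\mathcal{L}}(g/A)$, so that $RM_{\mathcal{L}_H}(g/A) = d\omega$ up to a finite remainder. Let $X$ be the $\mathcal{L}(A)$-locus of $g$, i.e., the smallest $\mathcal{L}(A)$-definable subset of $D^n$ containing $g$; by $\omega$-stability of $T$, this is an irreducible $\mathcal{L}(A)$-definable set of Morley rank $d$.

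Next, I would show that $X$ is an $\mathcal{L}(A)$-subgroup of $D^n$ and that $G^0 \subseteq X$ by a group-chunk argument at the generic level. Take $g_1, g_2$ $\mathcal{L}_H$-independent realisations of $\mathrm{tp}_{\mathcal{L}_H}(g/A)$. The product $g_1 g_2$ is again an $\mathcal{L}_H$-generic of $G^0$, so has the same $\mathcal{L}$-type as $g$ and lies in $X$. Using that $\mathcal{L}_H$-independent generics realise $\mathcal{L}$-independent copies of the $\mathcal{L}$-generic type of $X$ (their joint $\mathcal{L}$-locus is $X \times X$), this transfers to $X \cdot X \subseteq X$ and $X^{-1} \subseteq X$, and equality follows by a Morley-rank count. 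The inclusion $G^0 \subseteq X$ then follows because every $h \in G^0$ can be written as $g_1 g_2$ with $g_1, g_2$ $\mathcal{L}_H$-generic over $A \cup \{h\}$, each with $\mathcal{L}$-type concentrated on $X$.

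For the converse inclusion $X \subseteq G^0$, pick any $x$ realising the $\mathcal{L}$-generic type $p$ of $X$ over $A$, and use the generalised extension and coheir properties of the $H$-expansion to realise it as an element $\mathcal{L}$-independent from $H(M)$ over $A$. This promotes $x$ to an $\mathcal{L}_H$-generic of $\mathcal{L}_H$-rank $d\omega$ over $A$, matching $g$. By the standard uniqueness property of $H$-structures, the $\mathcal{L}_H$-type of a tuple of maximal $\mathcal{L}$-dimension over $A$ is determined by its $\mathcal{L}$-type together with the assertion of $\mathcal{L}$-independence from $H$ over $A$; hence $x$ and $g$ share an $\mathcal{L}_H$-type and so $x \in G^0$. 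This gives $X = G^0$, which is $\mathcal{L}(A)$-definable. To pass from $G^0$ to $G$, note that $G/G^0$ is finite, so $G$ is a finite union of cosets $cG^0$ with $c$ algebraic over $A$ in $\mathcal{L}_H$; since the set of cosets is $\mathrm{Aut}((D,H)/A)$-invariant, one can group $\mathcal{L}_H$-algebraic conjugates into $\mathcal{L}$-algebraic orbits over $A$, giving $G$ as an $\mathcal{L}(A)$-definable finite union of translates of $G^0$.

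The principal obstacle is the uniqueness step in paragraph three: justifying that two $\mathcal{L}_H$-types over $A$ sharing the same $\mathcal{L}$-type and both attaining $\mathcal{L}_H$-rank $d\omega$ necessarily coincide. This is precisely where the structural hypotheses on the $H$-expansion must be used, together with $\aleph_0$-saturation of $(D,H)$. A secondary delicate point is the final coset-lifting from $G^0$ to $G$: one must verify that the $\mathcal{L}_H$-algebraic parameters needed to name coset representatives do not produce any genuinely new $\mathcal{L}_H$-algebraic element over $A$ beyond those already $\mathcal{L}$-algebraic over $A$, which requires the fact that $\mathcal{L}_H$-algebraicity of finite tuples over a finite set $A$ reduces, after accounting for $H$, to $\mathcal{L}$-algebraicity.
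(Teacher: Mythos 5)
This statement is quoted as Fact~\ref{BV-gp1} from \cite{Berenstein2016} and is not proved in the paper; the closest argument here is the Theorem generalising it to supersimple $SU$-rank-one theories, proved via Lemma~\ref{lem-indreal}, Fact~\ref{prop-H} and Fact~\ref{prop-XX}. Your overall strategy --- transfer the $\mathcal{L}_H$-generic type to an $\mathcal{L}$-type over $A$, build an $\mathcal{L}(A)$-definable group from it by a group-chunk argument, then match generics to identify that group with $G$ --- is the same as the paper's, and the ``principal obstacle'' you isolate is exactly what Fact~\ref{prop-H}(1) together with the vanishing of $H$-bases of group generics (Fact~\ref{prop-H}(3)) supplies, so the crux is indeed available.

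There are, however, two concrete gaps. First, the $\mathcal{L}(A)$-locus $X$ of $g$ is not itself a subgroup, and ``$X\cdot X\subseteq X$'' cannot be extracted from generic closure: what the transfer actually gives is that for $\mathcal{L}$-independent realisations $x_1,x_2$ of the generic type $p$, the product $x_1\cdot x_2$ again realises $p$. The correct conclusion (Fact~\ref{prop-XX}, or equivalently the stabiliser of $p$ in $D^n$ in the $\omega$-stable setting) is that $S:=p(M)\cdot p(M)$ is an $\mathcal{L}(A)$-definable group in which $p$ is the generic; it is $S$, not $X$, that is the candidate for $G^0$. Second, and more seriously, your converse inclusion is not established as written: you produce \emph{one} realisation $x$ of $p$ with $x\ind_A H(M)$ and show that this particular $x$ lies in $G^0$. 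That does not place all of $X$ --- nor even all realisations of $p$, most of which are entangled with $H(M)$ --- inside the $\mathcal{L}_H$-definable set $G^0$. The repair, which is how the paper's Theorem proceeds, runs in the opposite direction: every $\mathcal{L}_H$-generic $a$ of the $\mathcal{L}$-definable group $S$ has $HB(a/A)=\emptyset$ and is $\mathcal{L}$-generic in $S$, hence realises $p$, hence by the uniqueness step is $\mathcal{L}_H$-generic in $G$; since $S$ is the set of products of two of its $\mathcal{L}_H$-generics, $S\subseteq G$. Note finally that running this argument with the set of \emph{all} $\mathcal{L}_H$-generic types of $G$ (as in Lemma~\ref{lem-indreal} and Fact~\ref{prop-XX}) handles disconnected groups directly, making your detour through $G^0$ and the subsequent coset-lifting --- whose claim that the coset representatives can be taken $\mathcal{L}$-algebraic over $A$ is itself left unjustified --- unnecessary.
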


\begin{fact}\label{BV-gp2}(\cite[Proposition 6.6]{Berenstein2016})

Let $M$ be a stable structure of $U$-rank one in a language $\mathcal{L}$ and let $H$ be a subset of $M$ such that $(M,H)$ is an $\aleph_1$-saturated $H$-structure. Let $A\subseteq M$ be countable and let $G\subseteq M^n$ be an $\mathcal{L}_H$-definable group over $A$. Let $G^0$ be the connected component of $G$. Then $G^0$ is definably isomorphic to an $\mathcal{L}$-definable group over $A$.
\end{fact}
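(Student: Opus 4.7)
The plan is to combine the description of generic types in H-structures with the group chunk theorem in stable theories. Since the base theory $T$ is stable of $U$-rank one, the $H$-theory $T_H$ is $\omega$-stable, so the connected component $G^0$ is $\mathcal{L}_H$-definable over $A$. I would pick an $\mathcal{L}_H$-generic type $p$ of $G^0$ over $A$ and take an $\mathcal{L}_H$-independent pair of realizations $a, b \models p$, setting $c = a \cdot b$. A key input is the standard description of generics in $H$-structures from \cite{Berenstein2016}: for an $\mathcal{L}_H$-generic element $a$ over a small set $A$, the tuple $a$ is $\mbox{acl}_{\mathcal{L}}$-independent from $H(M)$ over $A$, and its $\mathcal{L}_H$-type over $A$ is determined by its $\mathcal{L}$-type over $A$.

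I would apply this to $a, b, c$ and the pairs they form. Since all three tuples are $\mbox{acl}_{\mathcal{L}}$-independent from $H(M)$ over $A$ (with the appropriate independence assumptions among themselves), their $\mathcal{L}_H$-types over $A$ reduce to their $\mathcal{L}$-types over $A$. The $\mathcal{L}_H$-definable graph of multiplication, restricted to such $\mathcal{L}_H$-generic triples, must therefore be definable by an $\mathcal{L}$-formula $\mu(x,y,z)$ over $A$. This yields an $\mathcal{L}$-definable group chunk over $A$, and the group chunk theorem for stable theories (in the Weil--Hrushovski style) produces an $\mathcal{L}$-definable group $G'$ over $A$ whose $\mathcal{L}$-generic type agrees with the $\mathcal{L}$-reduct of the $\mathcal{L}_H$-generic type of $G^0$. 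A standard argument extending the local isomorphism on generics then produces an $\mathcal{L}_H$-definable isomorphism $G^0 \to G'$ defined on all of $G^0$.

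The hard part will be the passage from $\mathcal{L}_H$-definable multiplication to an $\mathcal{L}$-formula on generic pairs: one has to verify carefully that the $\mathcal{L}_H$-type of a generic triple $(a, b, c)$ over $A$ is really encoded by its $\mathcal{L}$-type over $A$, which requires analysing how the predicate $H$ can appear in $\mathcal{L}_H$-formulas applied to inputs that are $\mbox{acl}_{\mathcal{L}}$-independent from $H(M)$. Additional care is needed for parameter bookkeeping, so that the resulting $\mathcal{L}$-definable group $G'$ is defined over $A$ rather than over $A$ augmented with auxiliary $H$-elements; the $\omega$-stability of $T_H$ and the countability of $A$ should make this manageable but not automatic.
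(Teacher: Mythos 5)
This statement is quoted by the paper as a Fact from \cite[Proposition 6.6]{Berenstein2016} and is not proved here; however, your strategy is essentially the one the paper itself deploys in Section~\ref{groups} to prove the stronger supersimple statement that subsumes it: generics of an $\mathcal{L}_H$-definable group have empty $H$-basis over $A$ (Fact~\ref{prop-H}(3)), the $\mathcal{L}_H$-type of an $H$-independent tuple is determined by the $\mathcal{L}$-type of the tuple together with its $H$-basis (Fact~\ref{prop-H}(1)), hence $a\cdot b\in\mbox{dcl}_{\mathcal{L}}(a,b,A)$ for independent generics (Lemma~\ref{lem-dcl}); compactness then yields an $\mathcal{L}$-type-definable generic multiplication, the group chunk theorem produces the $\mathcal{L}$-group, and the generic isomorphism is globalized via the $X\cdot X$ device (Fact~\ref{prop-XX}). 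Your ``hard part'' is thus exactly Fact~\ref{prop-H}(1) plus the empty-$H$-basis computation, and the route is sound. The one genuine gap is the opening claim that $T_H$ is $\omega$-stable and therefore $G^0$ is $\mathcal{L}_H$-definable over $A$: from $T$ stable of $U$-rank one one only gets (by the fact recorded in the introduction) that $T_H$ is superstable of rank at most $\omega$, so $G^0$ is a priori only a countable intersection of definable finite-index subgroups, i.e.\ type-definable over $A$, not definable. This does not sink the argument --- the stable group chunk theorem and the extension of the isomorphism both work for a connected type-definable group with its unique generic --- but you must either run everything in the type-definable category or, as the paper does for its generalization, bypass $G^0$ entirely and treat $G$ itself using the simple-theoretic group chunk theorem (Fact~\ref{prop-gpchunk}) together with Lemma~\ref{lem-hyperdef} to descend from hyper-definable to type-interpretable.
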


In this section, we will show that in supersimple theories, all $\mathcal{L}_H$-definable groups in $H$-structures are definably isomorphic to $\mathcal{L}$-definable groups.\footnote{Indeed, we need to assume that the base theory has elimination of imaginaries. Fact \ref{BV-gp1} and \ref{BV-gp2} also have this assumption.}

We first introduce some basic notions and facts about $H$-structures developed in \cite{Berenstein2016}, as well as some results about groups in simple theories that we will use later. 

Let $(M,H(M))$ be an $H$-structure. To simplify the notation, we write with subscript/superscript $H$ for notions in $T_H:=Th_{\mathcal{L}_H}(M,H(M))$ and no subscript/superscript for $T=Th_{\mathcal{L}}(M)$. We also write $\mathcal{L}$-independent to denote forking independence in $T$ ($\mathcal{L}_H$-independent for $T_H$ respectively), and $\mathcal{L}$-generic for generic group element in $T$ ($\mathcal{L}_H$-generic for $T_H$ respectively).

\begin{definition}\label{def-12}
Let $A$ be a subset of an $H$-structure $(M,H(M))$. We say that $A$ is $H$-independent if $A\ind_{A\cap H(M)}H(M)$.
\end{definition}
\noindent\emph{Remark:} Note that this is not the same as being $\mathcal{L}_H$-independent in the sense of forking in $T_H$.

\begin{definition}
Let $a$ be a tuple in an $H$-structure $(M,H(M))$ and let $C=\mbox{acl}(C)$ be $H$-independent. Define the \textsl{$H$-basis of $a$ over $C$}, denoted by $HB(a/C)$, as the smallest tuple $h$ in $H(M)$ such that $a\ind_{C,h}H(M)$. 
\end{definition}
By \cite[Proposition 3.9]{Berenstein2016}, $H$-bases exist and are unique up to permutation. Here is a useful observation:
\begin{lemma} Let $(M,H(M))$ be an $H$-structure and $a$ be a tuple. Suppose a subset $C=\mbox{acl}(C)$ is $H$-independent and $HB(a/C)=\emptyset$. Then  $HB(a,C)=HB(C)$.
\end{lemma}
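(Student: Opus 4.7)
The plan is to show that $h_0 := HB(C)$ already serves as the $H$-basis of the combined tuple $(a,C)$. There are two things to verify: (i) $aC \ind_{h_0} H(M)$, so that $h_0$ is a valid candidate, and (ii) no strictly smaller tuple in $H(M)$ witnesses this property, so that $h_0$ is minimal.

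For (i), I would start from the two independences given by the hypotheses: $C \ind_{h_0} H(M)$ by definition of $h_0 = HB(C)$, and $a \ind_C H(M)$, which is just the unpacking of $HB(a/C) = \emptyset$. Since $h_0 \subseteq H(M)$ and $C$ is algebraically closed, base monotonicity applied to $a \ind_C H(M)$ upgrades it to $a \ind_{Ch_0} H(M)$. Transitivity of forking, over the intermediate base $Ch_0$, then combines $C \ind_{h_0} H(M)$ with $a \ind_{Ch_0} H(M)$ to yield $aC \ind_{h_0} H(M)$, which is exactly the defining property that $h_0$ must satisfy in order to be $HB(aC)$.

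For (ii), suppose $h' \in H(M)^{<\omega}$ is another tuple with $aC \ind_{h'} H(M)$. Left monotonicity gives $C \ind_{h'} H(M)$, so the minimality of $HB(C)$ forces $|h'| \geq |h_0|$. Combined with (i), and appealing to the uniqueness-up-to-permutation clause from \cite[Proposition 3.9]{Berenstein2016}, we conclude $HB(a,C) = h_0 = HB(C)$. I do not anticipate any real obstacle: once the definitions are unfolded, the argument reduces to a three-line forking calculation, and the standing hypotheses (supersimplicity of the base theory, $C = \mbox{acl}(C)$, and $H$-independence of $C$) ensure that symmetry, transitivity, and base monotonicity are freely available in the exact form required.
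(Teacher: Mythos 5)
Your argument is correct and is essentially the paper's proof in different clothing: the paper also reduces everything to showing $a,C \ind_{HB(C)} H(M)$, but proves that instance of transitivity by hand (arguing by contradiction and computing $\mbox{dim}_{\mbox{acl}}$ over carefully chosen finite subtuples $c''\subseteq C$ using additivity), whereas you invoke base monotonicity, transitivity and symmetry of the independence calculus directly, and you additionally spell out the minimality step that the paper leaves implicit. One small correction: supersimplicity is not a standing hypothesis here --- the lemma is stated for arbitrary $H$-structures, where $\ind$ is the pregeometry ($\mbox{acl}$-)independence of the geometric base theory --- but that relation also satisfies symmetry, transitivity and base monotonicity over arbitrary base sets, so your calculation goes through unchanged.
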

\begin{proof}
Suppose not, then $a,C\nind_{HB(C)}H(M)$. There is a finite tuple $c\subseteq C$ such that $a,c\nind_{HB(C)}H(M)$. Denote the dimension of the underlying geometric theory as $\mbox{dim}_{\mbox{acl}}$.  Let $c'\subseteq C$ be a finite tuple such that $\mbox{dim}_{\mbox{acl}}(a/C)=\mbox{dim}_{\mbox{acl}}(a/c')$. Let $c''\subseteq C$ be a tuple containing both $c$ and $c'$. Then $\mbox{dim}_{\mbox{acl}}(a,c''/HB(C))>\mbox{dim}_{\mbox{acl}}(a,c''/H(M))$. By the choice of $c''$, we have $$\mbox{dim}_{\mbox{acl}}(a/c'')\geq\mbox{dim}_{\mbox{acl}}(a/c'',HB(C))\geq\mbox{dim}_{\mbox{acl}}(a/C)=\mbox{dim}_{\mbox{acl}}(a/c'').$$ By assumption, $\mbox{dim}_{\mbox{acl}}(a/C,H(M))=\mbox{dim}_{\mbox{acl}}(a/C)$. Therefore, $$\mbox{dim}_{\mbox{acl}}(a/c'')\geq \mbox{dim}_{\mbox{acl}}(a/c'',H(M))\geq \mbox{dim}_{\mbox{acl}}(a/C,H(M))=\mbox{dim}_{\mbox{acl}}(a/C)=\mbox{dim}_{\mbox{acl}}(a/c'').$$ We conclude that $\mbox{dim}_{\mbox{acl}}(a/c'',H(M))=\mbox{dim}_{\mbox{acl}}(a/c'')=\mbox{dim}_{\mbox{acl}}(a/c'',HB(C))$.
Since $C$ is $H$-independent, we also have $\mbox{dim}_{\mbox{acl}}(c''/H(M))=\mbox{dim}_{\mbox{acl}}(c''/HB(C))$. By additivity of $\mbox{dim}_{\mbox{acl}}$, we have $$\begin{aligned}
\mbox{dim}_{\mbox{acl}}(a,c''/H(M))=\mbox{dim}_{\mbox{acl}}(a/c'',H(M))+\mbox{dim}_{\mbox{acl}}(c''/H(M))\\
=\mbox{dim}_{\mbox{acl}}(a/c'',HB(C))+\mbox{dim}_{\mbox{acl}}(c''/HB(C))=\mbox{dim}_{\mbox{acl}}(a,c''/HB(C)),
\end{aligned}$$ a contradiction.
\end{proof}

\begin{definition}
Let $M$ be a structure. A set $X$ is \textsl{hyper-definable} over $A\subseteq M$ if there is a type-definable set $Y\subseteq M^n$ for some $n\in\mathbb{N}$ and a type-definable equivalence relation $E$ on $Y$ both defined over $A$ such that $X=Y/E$.
\end{definition}

\begin{fact}\label{prop-H} \cite[Lemma 2.8, Corollary 3.14, Proposition 6.2]{Berenstein2016}

Let $(M,H(M))$ be an $H$-structure. 
\begin{enumerate}
\item
Let $a,b$ be $H$-independent tuples such that $\mbox{tp}(a,HB(a))=\mbox{tp}(b,HB(b))$. Then $\mbox{tp}_{H}(a)=\mbox{tp}_{H}(b)$.
\item
Let $A$ be a subset of $M$, then $\mbox{acl}_{H}(A)=\mbox{acl}(A,HB(A))$.
\item
Suppose $Th(M)$ is superrosy of thorn-rank one and $(M,H(M))$ is $\aleph_0$-saturated. Let $D$ be an $\mathcal{L}_H$-definable group over some finite $H$-independent set $A$. Let $b$ be a generic element of the group. Then $HB(b/A)=\emptyset$.
\end{enumerate}
\end{fact}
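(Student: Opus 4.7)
The plan is to handle the three clauses in sequence, since the later ones rely on the earlier.

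For clause (1), I would show via back-and-forth that $\mbox{tp}(a,HB(a))$ determines $\mbox{tp}_H(a)$. Fix an $\mathcal{L}$-elementary map $f$ sending $a\,HB(a)$ to $b\,HB(b)$, and enumerate $H(M)$. At each stage, to extend $f$ by a fresh $h'\in H(M)\setminus HB(a)$, I would use that $H$-independence gives $h'\ind_{HB(a)}a$, so $\mbox{tp}(h'/a,HB(a))$ is the non-forking extension of $\mbox{tp}(h'/HB(a))$; the $f$-image type over $b,HB(b)$ is then non-algebraic, and the density/coheir property supplies an $h''\in H(M)$ realizing it, to which we send $h'$. Going back and forth, and using the extension property to match elements of $M\setminus H(M)$ against $\mathcal{L}$-free types over already-treated parameters, the limit map is a partial $\mathcal{L}_H$-elementary map witnessing $\mbox{tp}_H(a)=\mbox{tp}_H(b)$.

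For clause (2), the inclusion $\mbox{acl}(A,HB(A))\subseteq \mbox{acl}_H(A)$ reduces to showing $HB(A)\subseteq \mbox{acl}_H(A)$: any two choices of a minimal $H$-basis for $A$ share an $\mathcal{L}$-type over $A$ by the uniqueness clause in the definition of $HB$, hence by clause (1) share an $\mathcal{L}_H$-type; since there are only finitely many realizations of this common $\mathcal{L}$-type inside $\mbox{acl}(A)$ (exchange in the base geometry), each coordinate of $HB(A)$ is $\mathcal{L}_H$-algebraic over $A$. For the reverse inclusion, I would contrapose: if $c\notin \mbox{acl}(A,HB(A))$, apply the generalised extension property repeatedly to produce infinitely many distinct realizations $c_i$ of $\mbox{tp}(c/A,HB(A))$, each with $HB(c_i/A\cup HB(A))=\emptyset$, and conclude via clause (1) that they share $\mbox{tp}_H$ over $A$, so $c\notin \mbox{acl}_H(A)$.

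Clause (3) is the technical heart and the main obstacle. Suppose for contradiction that $h:=HB(b/A)$ is non-empty. The plan is to translate the dependence around the group and derive a rank contradiction. Pick $b_1$ realizing the $\mathcal{L}_H$-generic type of $D$ over $A$ that is $\mathcal{L}_H$-independent from $b$ over $A$, and set $c=b_1\cdot b$. By generic translation, $c$ is itself $\mathcal{L}_H$-generic of $D$ over $A$, and any two of $\{b,b_1,c\}$ recover the third by a group operation, so pairwise $\mathcal{L}_H$-independence and the symmetry $b\equiv_A^H b_1\equiv_A^H c$ force $HB(b_1/A)=h_1$ and $HB(c/A)=h_c$ to be $\mathcal{L}$-conjugate copies of $h$, each realized inside $H(M)$ and pairwise $\mathcal{L}$-independent over $A$. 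On the other hand, the group law gives an $\mathcal{L}_H$-algebraic relation among $b,b_1,c$, and pushing through clause (2) one obtains $h_c\in\mbox{acl}(A,b,b_1,h,h_1)$, which under the thorn-rank-one hypothesis of the base forces $h_c\in\mbox{acl}(A,h,h_1)$. Combined with the three-way $\mathcal{L}$-independence of $h,h_1,h_c$ over $A$, this collapses their $\mathcal{L}$-dimension and contradicts minimality of the $HB$'s. The delicate point, and where I expect the real work to lie, is to carry out the transfer from the $\mathcal{L}_H$-algebraic relation among $b,b_1,c$ to an $\mathcal{L}$-relation among $h,h_1,h_c$ cleanly; I would adapt the rank-and-translation arguments of \cite{Berenstein2016} used in the stable case, replacing $U$-rank with thorn-rank and Morley rank with the thorn-rank-one dimension.
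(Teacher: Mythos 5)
This statement is not proved in the paper at all: it is imported verbatim from \cite{Berenstein2016} (Lemma~2.8, Corollary~3.14, Proposition~6.2) and used as a black box, so there is no in-paper proof to compare yours against. Judged on its own, your sketch follows the standard route for (1) and (2) but has concrete gaps in each clause, and (3) does not reach its contradiction.

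For clause (1), the back-and-forth is the right idea, but when you extend the map to an element $m\in M\setminus H(M)$ you cannot simply ``use the extension property against $\mathcal{L}$-free types'': $m$ may be algebraic over the current domain together with $H(M)$ without being algebraic over the domain alone. The standard fix is to first adjoin $HB(m/\mathrm{dom})$ to the domain (matched into $H$ on the other side via density), after which $m$ is independent from $H(M)$ over the enlarged domain and the extension property applies; your sketch omits this step. For clause (2), the reverse inclusion as written only handles $c\notin\mathrm{acl}(A,H(M))$: if $c\in\mathrm{acl}(A,H(M))\setminus\mathrm{acl}(A,HB(A))$, the realizations $c_i$ you manufacture with $HB(c_i/A\cup HB(A))=\emptyset$ have a \emph{different} $\mathcal{L}_H$-type from $c$ (they disagree on algebraicity over $A\cup H$), so their abundance says nothing about $\mathrm{tp}_H(c/A)$; you need to instead move the nonempty $H$-basis of $c$ around inside $H(M)$ using density. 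Also, the phrase ``finitely many realizations of this common $\mathcal{L}$-type inside $\mathrm{acl}(A)$'' is off: $HB(A)$ need not lie in $\mathrm{acl}(A)$; the correct reason it lies in $\mathrm{acl}_H(A)$ is that it is unique as a set, hence setwise invariant under $\mathrm{Aut}_H(M/A)$.

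Clause (3) is where the argument actually fails rather than merely being incomplete. Following your own steps, what one obtains from $c\in\mathrm{acl}_H(A,b,b_1)=\mathrm{acl}(A,b,b_1,h,h_1)$ is that $HB(c/A)\subseteq h\cup h_1\cup(A\cap H(M))$; this is perfectly consistent and does not contradict the minimality of the $H$-bases, nor is there any reason for $h,h_1,h_c$ to be pairwise disjoint (hence ``three-way independent'') in the first place --- indeed the containment you derive shows they typically are not. So the ``collapse of $\mathcal{L}$-dimension'' you invoke never materializes. The actual proof of Proposition~6.2 in \cite{Berenstein2016} runs through a rank computation (the $U^{\text{\th}}$-rank of a type in $T_H$ records both $\dim(b/A,H(M))$ and $|HB(b/A)|$, and genericity of $b$ together with the Lascar inequalities applied to the translates forces $|HB(b/A)|=0$), which is a genuinely different mechanism from the dimension-collapse you propose. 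You correctly flag this clause as the unresolved technical heart, but as it stands the contradiction is not established.
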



\begin{fact}\cite[Proposition 5.6]{Berenstein2016} Let $(M,H(M))\models T_H$ be a $\kappa$-saturated $H$-structure and $C\subseteq D\subseteq M$ be $\mbox{acl}_H$-closed and $\max\{|C|,|D|\}<\kappa$. Suppose $T$ is supersimple of SU-rank one and $a\in M$. Then $a\ind^{\!\! H}_C D$ if and only if none of the following holds:
\begin{itemize}
\item
$a\in D\setminus C$;
\item
$a\in\mbox{acl}(H(M),D)\setminus \mbox{acl}(H(M),C)$;
\item
$HB(a/C)\neq HB(a/D)$.
\end{itemize}
\end{fact}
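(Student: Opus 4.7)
The plan is to establish each direction of the biconditional by reducing $\mathcal{L}_H$-forking to $\mathcal{L}$-algebraicity, via the $H$-basis machinery recorded in Fact \ref{prop-H}.

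For the direction ``any of the three conditions implies $a \nind^{\!\! H}_C D$'', I would argue case by case. If $a \in D \setminus C$, then the algebraic formula $x=a$ lies in $\mbox{tp}_H(a/D)$ but, since $C=\mbox{acl}_H(C)$ and $a \notin C$, cannot be realized by any element of the correct type over $C$, forcing dividing immediately. If $HB(a/C) \neq HB(a/D)$, pick $h \in HB(a/D) \setminus HB(a/C)$; the defining property of the $H$-basis together with the exchange property of $\mbox{acl}_{\mathcal{L}}$ yields an $\mathcal{L}$-algebraic dependence of $a$ on the parameter $h\in H(M)$ over $D$ which was absent over $C\cup HB(a/C)$, reducing this case to a variant of case (ii). For the main case (ii), fix a witnessing $\mathcal{L}$-formula $\phi(x;\bar{d},\bar{h})$ with $\bar{d}\in D$, $\bar{h}\in H(M)$; use the density property to absorb the $H(M)$-parameters into an existential $H$-quantifier, obtaining an $\mathcal{L}_H$-formula $\exists\bar{y}\,(\bigwedge H(y_i)\wedge\phi(x;\bar{d},\bar{y}))$ over $D$. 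An extension-property based Morley sequence of $\mathcal{L}_H$-conjugates of $\bar{d}$ over $C$ then delivers a $k$-inconsistent family, witnessing that the new formula divides over $C$.

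For the converse, assume all three conditions fail and set $h_0:=HB(a/C)=HB(a/D)$. Failure of (i) and (ii) together, after unravelling that $a\ind_{Ch_0}H(M)$ (which is the defining property of the $H$-basis), translates into $a\notin\mbox{acl}_{\mathcal{L}}(Dh_0)\setminus\mbox{acl}_{\mathcal{L}}(Ch_0)$; since $T$ is supersimple of $SU$-rank one, this is exactly $a\ind_{Ch_0} Dh_0$ in $T$. By Fact \ref{prop-H}(1) the $H$-type of $a$ over $D$ is determined by $\mbox{tp}_{\mathcal{L}}(a h_0/D)$ together with the $H$-independence of $a$ from $H(M)$ over $Ch_0$, so I would realize $\mbox{tp}_H(a/C)$ by an $a'$ whose $\mathcal{L}$-type over $Dh_0$ is a non-forking extension of its $\mathcal{L}$-type over $Ch_0$; a short computation with the $H$-basis — using that $HB(a'/C)=HB(a'/D)=h_0$ is preserved under the $\mathcal{L}$-non-forking extension — gives $a'\ind^{\!\! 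H}_C D$, hence $a\ind^{\!\! H}_C D$.

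The main obstacle is case (ii) of the forward direction: one must convert external $H(M)$-parameters inside an $\mathcal{L}$-algebraic witness into a genuine $\mathcal{L}_H$-formula over $D$ and verify dividing via a carefully chosen Morley sequence. Both the density property of $H$-structures and the rank-one hypothesis on $T$ feed into this step essentially, the former to replace $H$-parameters by existential quantifiers and the latter to ensure that the resulting existential $\mathcal{L}_H$-formula retains a bounded number of realizations so that dividing genuinely occurs. A secondary technical point is that in the converse direction one must be careful to choose $a'$ so that $HB(a'/D)=h_0$ still holds after the non-forking $\mathcal{L}$-extension; this is where working inside a $\kappa$-saturated $H$-structure and exploiting the uniqueness of $H$-types over $\mbox{acl}_H$-closed sets (Fact \ref{prop-H}(1)(2)) is crucial.
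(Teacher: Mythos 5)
This statement is quoted as a \emph{Fact} from \cite[Proposition 5.6]{Berenstein2016}; the paper itself gives no proof of it, so there is no internal argument to compare yours against. Judged on its own terms, your proposal is not a proof: it is a plausible-sounding reduction plan with the hard steps asserted rather than carried out, and the converse direction is essentially circular. To show $a\ind^{\!\!H}_C D$ you must certify \emph{non-forking in $T_H$}, but your argument ends with ``a short computation with the $H$-basis gives $a'\ind^{\!\!H}_C D$'' --- the only way to read off $\mathcal{L}_H$-non-forking from $\mathcal{L}$-data about $a'$, $h_0$ and $H(M)$ is precisely the characterization you are trying to prove. (You also need $\mbox{tp}_H(a'/D)=\mbox{tp}_H(a/D)$, not just $\mbox{tp}_H(a'/C)=\mbox{tp}_H(a/C)$, before independence of $a'$ transfers to $a$.) The standard way around this circularity, and the way such characterizations are actually proved in the lovely-pairs/$H$-structures literature, is to \emph{define} a ternary relation by the negation of the three clauses and verify the Kim--Pillay axioms for it (invariance, symmetry, transitivity, local character, extension, and the independence theorem over $\mbox{acl}_H$-closed sets), then conclude it coincides with forking independence. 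That global strategy is absent from your proposal.

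The forward direction also has concrete gaps. In case (iii) you ``pick $h\in HB(a/D)\setminus HB(a/C)$'', but neither $H$-basis need contain the other (over the larger set $D$ a completely different, possibly shorter, tuple from $H(M)$ may witness algebraicity of $a$), so such an $h$ may not exist; the case split has to be organized differently. In case (ii), the formula $\exists\bar{y}\,(\bigwedge H(y_i)\wedge\phi(x;\bar{d},\bar{y}))$ is typically non-algebraic, and the claimed $k$-inconsistent family of $C$-conjugates is not constructed: all the sets $\mbox{acl}(H(M),D_i)$ for $\mathcal{L}_H$-conjugates $D_i$ of $D$ over $C$ contain $\mbox{acl}(H(M),C)$ and may overlap well beyond it, so dividing does not follow from the extension property alone. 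Each of these steps is exactly where the real content of \cite[Proposition 5.6]{Berenstein2016} lies, and none of them is established in the proposal.
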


\begin{fact}\label{prop-XX}\cite[Lemma 4.4.8]{Wagner-Supersimple}
Let $G$ be a type-definable/hyper-definable group in a simple theory. Let $X$ be a non-empty type-definable/hyper-definable subset of $G$. Suppose for independent $g,g'\in X$ we have $g^{-1}\cdot g'\in X$, and put $Y=X\cdot X$. Then $Y$ is a type-definable/hyper-definable subgroup of $G$, and $X$ is generic in $Y$. In fact, $X$ contains all generic types for $Y$.
\end{fact}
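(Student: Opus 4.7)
The plan is to execute the classical stabiliser-of-a-generic-type construction, adapted to the simple-theory hyper-definable setting. First I would fix a small base $A$ over which $G$ and $X$ are hyper-definable, and choose a complete type $p$ over $A$ concentrated on $X$ that is maximal among such types in the non-forking pre-order (for instance of maximal $SU$-rank among complete extensions of ``$x\in X$''). The hypothesis then reads: for $A$-independent $g_1,g_2\models p$, we have $g_1^{-1}g_2\in X$. The identity $g_1\cdot(g_1^{-1}g_2)=g_2$ exhibits $g_1^{-1}g_2$ as a group element right-translating one realisation of $p$ to an independent realisation of $p$, which is exactly the behaviour of elements of the (right) stabiliser $S=\mathrm{Stab}(p)$ of $p$. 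By the standard stabiliser theorem for simple theories (in the form proved in \cite{Wagner-Supersimple}), $S$ is then a hyper-definable subgroup of $G$.

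The next step is to identify $Y=X\cdot X$ with $S$. The above observation, together with the non-forking calculus, gives $X^{-1}\cdot X\subseteq S$. Conversely, given $s\in S$ I would produce a realisation $g\models p$ with $g\ind_{A}s$ using the extension axiom of non-forking, set $g':=gs$, verify $g'\models p$ (since $s\in S$), and check that $g\ind_{A}g'$ using the independence of $g$ from $s$ and the coset structure of $S$; the hypothesis then yields $s=g^{-1}g'\in X$, so $S\subseteq X$. The symmetric reading of the hypothesis (if $g_1^{-1}g_2\in X$ then also $g_2^{-1}g_1=(g_1^{-1}g_2)^{-1}\in X$) allows one to flip between $X^{-1}X$ and $XX$ at the level of generic elements, and choosing any $x_0\in X$ and $x_1\models p|_{Ax_0}$ gives $x_0^{-1}x_1\in X\cap S$, which after translation by $x_0$ places the identity in $Y$. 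Coset bookkeeping then forces $Y=S$, a hyper-definable subgroup.

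For the genericity claims, $p$ is by construction contained in $X$ and is a generic type of $Y=S$ (by maximality of its $SU$-rank, its restriction to any small base forks with no element of $Y\setminus S$), so $X$ is generic in $Y$; and any other complete generic type $q$ of $Y$ over $A$ is, via left-translation by some element of $S$, conjugate to $p$, hence also concentrated on $X$. Thus $X$ contains all generic types of $Y$.

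The main obstacle is the rigorous verification that $\mathrm{Stab}(p)$ is hyper-definable and that the amalgamation step ``given $s\in S$, find $g\models p$ with the right independence over $s$'' closes correctly — in particular, ensuring that $g\ind_{A}gs$ holds and not merely $g\ind_{A}s$, which requires some care because in the simple (rather than stable) setting one cannot appeal to definability of types and must work only with the symmetry, transitivity, and local character of non-forking. A subsidiary delicate point is obtaining $Y=X\cdot X$ exactly, rather than only $X\cdot X\subseteq S$ or $S$ being a union of cosets meeting $Y$; this uses the symmetric reading of the hypothesis both for inversion and for placing the identity inside $Y$.
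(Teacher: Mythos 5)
First, a point of reference: the paper does not prove this statement at all --- it is quoted as a Fact from Wagner's book (Lemma 4.4.8 there), so there is no in-paper proof to compare with and your attempt has to stand on its own. On its own terms it has a genuine gap, and the gap sits at the centre of the plan: the identification of $Y=X\cdot X$ with $S=\mathrm{Stab}(p)$, and in particular the inclusion $S\subseteq X$, are false in general. Take $G$ any type-definable group and let $X$ be its set of generic elements (exactly the instance this paper applies the Fact to, with $X=\pi_G(M^n)$). The hypothesis holds and $Y=X\cdot X=G$, but $\mathrm{Stab}(p)$ for a generic type $p$ is only a bounded-index subgroup of $G$ and can be proper: for $G=\mathbb{Z}/2\mathbb{Z}\times V$ with $V$ an infinite vector space, $\mathrm{Stab}(p)=\{0\}\times V\subsetneq G=X\cdot X$. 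Moreover $1\in\mathrm{Stab}(p)$ always, while $1\notin X$ whenever $G$ is unbounded, so $S\subseteq X$ cannot hold (nor does the Fact claim it: it only claims $X$ contains the \emph{generics} of $Y$). The precise step that breaks is the one you flag as delicate: from $g\models p$, $g\ind_A s$ and $g':=gs\models p$ you cannot deduce $g\ind_A g'$ --- for $s=1$ this would read $g\ind_A g$ --- and without that independence the closure hypothesis cannot be applied to $g^{-1}g'=s$. That deduction only works when $s$ itself has maximal rank, i.e.\ for generic elements of the stabiliser, which is exactly why the correct conclusion is the weaker one stated.

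Two further points. The Fact is asserted for simple, not supersimple, theories, so ``maximal $SU$-rank'' is unavailable; one must select a largest completion of $X$ via the stratified local ranks $D(\cdot,\Phi,k)$. And the last step is backwards: left-translating $p$ by elements of $\mathrm{Stab}(p)$ returns $p$ itself, so it cannot produce the other generic types of $Y$; for that one translates by elements of $Y$ and must then re-invoke the hypothesis on $X$ to see the translate still lies in $X$. A proof that does go through works with $X$ directly rather than with a stabiliser: insert an auxiliary $g\in X$ independent from all the data and use the hypothesis (in both orders, $a^{-1}g\in X$ and $g^{-1}a\in X$) to verify that $X\cdot X$ contains $1$ --- e.g.\ $1=(a^{-1}g)(g^{-1}a)$ --- and is closed under product and inversion; a maximal-rank completion of $X$ enters only at the end, to establish genericity of $X$ in $Y$. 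Your correct observation that $g^{-1}g'$ lands in $S(p)$ for independent realisations of such a $p$ is a useful ingredient of that final step, but it cannot be used to construct $Y$ itself.
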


\begin{fact}\label{prop-gpchunk}\cite[Theorem 4.7.1]{Wagner-Supersimple}
We fix an ambient simple theory. Let $\pi$ be a partial type and $\star$ be a partial type-definable function defined on pairs of independent realizations of $\pi$, both over $\emptyset$ such that 
\begin{enumerate}
\item
Generic independence: for independent realizations $a,b$ of $\pi$ the product $a\star b$ realizes $\pi$ and is independent from $a$ and from $b$;
\item
Generic associativity: for three independent realizations $a,b,c$ of $\pi$, we have $(a\star b)\star c=a\star(b\star c)$; 
\item
Generic surjectivity: for any independent $a,b$ realizing $\pi$, there are $c$ and $c'$ independent from $a$ and from $b$, with $a\star c=b$ and $c'\star a=b$.
\end{enumerate}
Then there are a hyper-definable group $G$ and a hyper-definable bijection from $\pi$ to the generic types of $G$, such that generically $\star$ is mapped to the group multiplication. $G$ is unique up to definable isomorphism.
\end{fact}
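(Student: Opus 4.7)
The plan is to recover the hyper-definable group $G$ in the standard Hrushovski–Weil manner, using generic right (or left) translations as group elements and the independence theorem in simple theories to glue partial data. Throughout, I work in a sufficiently saturated monster and exploit the fact that in a simple theory two independent realizations of a stationary type over an amalgamation base have canonical amalgams.

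First I would introduce \emph{left-multiplication germs}: for $a \models \pi$, think of $a$ as the partial function $b \mapsto a \star b$ defined whenever $b \models \pi$ is independent from $a$. On pairs $(a,a')$ of realizations of $\pi$ I put $a \sim_\ell a'$ if $a \star b = a' \star b$ for some (equivalently, every) $b \models \pi$ independent from $aa'$. Symmetry and reflexivity are immediate; transitivity is the first real use of the independence theorem, since to compare witnesses for $a \sim_\ell a'$ and $a' \sim_\ell a''$ one has to amalgamate two generic extensions of $\mathrm{tp}(a')$ over independent sides. The relation is type-definable over $\emptyset$ because $\star$ and $\pi$ are, so the quotient $\pi/{\sim_\ell}$ is a hyper-definable set.

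Next I would define the multiplication on germs by $[a]\cdot[a'] := [a \star a']$ whenever $a \ind a'$, checking well-definedness and associativity from generic associativity of $\star$, and then showing this extends to a total hyper-definable operation on the quotient by forking independence in simple theories (take a generic $a''$ of the same type as $a'$ independent from everything in sight, transfer the operation via a germ-equivalent representative). Generic surjectivity produces, for a fixed generic $a$, elements $e$ with $e \star a = a$ and $c$ with $c \star a$ a prescribed generic; one checks that $[e]$ acts as identity on every generic and that the $[c]$ provide inverses; hence the hyper-definable structure $G$ on germs is a group. Finally the map $a \mapsto [a]$ is a bijection between realizations of $\pi$ (modulo the equivalence hidden in $\sim_\ell$, which is trivial on generics thanks to generic surjectivity) and generic types of $G$, and intertwines $\star$ with group multiplication by construction. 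Uniqueness up to definable isomorphism follows because any other such $G'$ must realize the same germ data on $\pi$.

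The hard part, as always with this kind of reconstruction, will be the bookkeeping at the hyper-definable level: ensuring that $\sim_\ell$ is transitive and that multiplication is well-defined and total really does require a careful invocation of the independence theorem for amalgamation over algebraically closed bases, together with stationarity of Lascar strong types; and one must be scrupulous that the partial type-definability of $\star$ propagates through each quotient so that $G$ is genuinely hyper-definable rather than merely ind-definable. Once those checks are in place, verifying the group axioms and the bijection with generic types is a straightforward transcription of generic associativity and surjectivity.
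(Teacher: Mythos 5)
This statement is quoted verbatim from Wagner's book (Theorem 4.7.1 there) and the paper gives no proof of it, so the comparison is with the standard Hrushovski--Weil germ argument of the cited source. Your sketch starts along exactly those lines (germs of left translations $f_a\colon b\mapsto a\star b$, type-definability of the germ equivalence, the independence theorem for amalgamation), but it has a genuine structural gap at the point where the group is actually produced. You propose to totalize the partial operation $[a]\cdot[a']:=[a\star a']$ on the quotient $\pi/{\sim_\ell}$ itself. That cannot work: the image of $\pi$ is only the set of \emph{generics} of the group $G$ being built, and the generics of an unbounded group never form a subgroup --- in particular the identity is not the germ of any realization of $\pi$. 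Your attempt to manufacture it, ``generic surjectivity produces, for a fixed generic $a$, elements $e$ with $e\star a=a$'', is not licensed by the hypotheses: generic surjectivity only applies to \emph{independent} pairs $a,b\models\pi$, and $a$ is not independent from itself (unless $\pi$ is bounded). The same obstruction defeats the proposed totalization ``transfer the operation via a germ-equivalent representative independent from everything in sight'': when the germ of $a'$ is interbounded with $a'$ (the generic case), no such representative exists.

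The correct continuation, and the one the cited proof uses, is to take $G$ to be the set of germs of \emph{compositions} $f_a\circ f_b$ for independent $a,b\models\pi$ --- equivalently, to apply the closure statement the paper records as Fact \ref{prop-XX} (Wagner's Lemma 4.4.8) to the set $X$ of germs of the $f_a$ inside the ambient semigroup of invertible germs: one checks that for independent $g,g'\in X$ the germ $g^{-1}g'$ again lies in $X$ (this is where generic surjectivity and associativity are used, legitimately, on independent data), and concludes that $G:=X\cdot X$ is a hyper-definable group in which $X$ is exactly the set of generics. The independence theorem enters precisely in showing that products of two dependent germs are well defined and that $X\cdot X$ is closed under multiplication. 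So your opening moves are sound, but the passage from the generic chunk to the group --- which is the actual content of the theorem --- is missing and, as written, would fail.
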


We proceed by some lemmas, most of which are about the properties of generic elements of definable groups in $H$-structures. 

In the following we will assume $\kappa$ is an cardinal with $\kappa\geq |\mathcal{L}|$.


\begin{lemma}\label{lem-dcl}
Let $(M,H(M))$ be a $\kappa$-saturated $H$-structure such that $Th(M)$ is supersimple of $SU$-rank one. Let $G$ be an $\mathcal{L}_H$-(type-)definable group over some set $A$ with $|A|<\kappa$ and $\mbox{acl}_{H}(A)=A$. Let $a,b$ be $\mathcal{L}_H$-independent and $\mathcal{L}_H$-generic elements in $G$. Then $a\cdot b\in\mbox{dcl}(a,b,A)$ and $a^{-1}\in\mbox{dcl}(a,A)$.  
\end{lemma}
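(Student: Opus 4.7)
The goal is to show that $ab$ is the unique realization of its $\mathcal{L}$-type over $A\cup\{a,b\}$, which gives $ab\in\mbox{dcl}(a,b,A)$. The key inputs are that $a$, $b$, and $ab$ are all $\mathcal{L}_H$-generic in $G$ over $A$, so their $H$-bases over $A$ vanish by Fact \ref{prop-H}(3), and Fact \ref{prop-H}(1) then lets us upgrade an $\mathcal{L}$-type equality to an $\mathcal{L}_H$-type equality for $H$-independent tuples with matching $H$-bases. First, since $\mbox{acl}_H(A)=A$, Fact \ref{prop-H}(2) gives $HB(A)\subseteq A$ and $A\cap H(M)=HB(A)$, so $A$ is $H$-independent. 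Standard simple-theory facts for generics of type-definable groups show that $ab$ is $\mathcal{L}_H$-generic over $A$ with $ab\ind^H_A a$ and $ab\ind^H_A b$, so Fact \ref{prop-H}(3) yields $HB(a/A)=HB(b/A)=HB(ab/A)=\emptyset$. The preceding unnumbered lemma (applied to $C=A$ and each of $a,b,ab$), together with transitivity of $\ind$, gives $HB(a,b,ab,A)=HB(A)$, so $(a,b,ab,A)$ is $H$-independent. By Fact \ref{prop-H}(2), $\mbox{acl}_H(a,b,A)=\mbox{acl}(a,b,A,HB(a,b,A))=\mbox{acl}(a,b,A)$, whence $ab\in\mbox{dcl}_{\mathcal{L}_H}(a,b,A)\subseteq\mbox{acl}(a,b,A)$.

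\textbf{Main step.} Let $c\models\mbox{tp}^{\mathcal{L}}(ab/A\cup\{a,b\})$; we show $c=ab$. Automorphism-invariance of $\mbox{acl}$ gives $c\in\mbox{acl}(a,b,A)$. Suppose for contradiction that $c\in H(M)$. Since $a,b\ind_A H(M)$, we get $(a,b)\ind_A c$, so $SU(c/A,a,b)=SU(c/A)$; combined with $c\in\mbox{acl}(a,b,A)$ this forces $SU(c/A)=0$, hence $c\in\mbox{acl}(A)\subseteq\mbox{acl}_H(A)=A$, whence $ab=c\in A$, contradicting the $\mathcal{L}_H$-genericity of $ab$. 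So $c\notin H(M)$, and thus $(a,b,c,A)\cap H(M)=HB(A)$. Combining $c\ind_{a,b,A}H(M)$ (which holds since $c\in\mbox{acl}(a,b,A)$) with $(a,b,A)\ind_{HB(A)}H(M)$ by transitivity yields $(a,b,c,A)\ind_{HB(A)}H(M)$, so $(a,b,c,A)$ is $H$-independent with $H$-basis $HB(A)$, exactly as $(a,b,ab,A)$. The two tuples also carry the same $\mathcal{L}$-type over $\emptyset$, since $c\equiv^{\mathcal{L}}_{A,a,b}ab$ and $HB(A)\subseteq A$. Fact \ref{prop-H}(1) then gives $c\equiv^{\mathcal{L}_H}_{A,a,b}ab$, and $ab\in\mbox{dcl}_{\mathcal{L}_H}(a,b,A)$ forces $c=ab$.

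\textbf{Inverse and main obstacle.} The claim $a^{-1}\in\mbox{dcl}(a,A)$ follows by the same argument with $a^{-1}$ replacing $ab$ and without the second factor: inversion is a bijection, so $a^{-1}$ is $\mathcal{L}_H$-generic over $A$, giving $HB(a^{-1}/A)=\emptyset$; the excluding-$H(M)$ step and the Fact \ref{prop-H}(1) application then proceed verbatim. The delicate point of the proof is excluding $c\in H(M)$ in the main step: this is precisely where the supersimple $SU$-rank-one hypothesis on $T$ is essential, because it forces any $\mathcal{L}$-algebraic-over-$(a,b,A)$ element of $H(M)$ to already lie in $\mbox{acl}(A)\subseteq A$. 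Everything else reduces to bookkeeping with $H$-bases and direct application of the cited facts.
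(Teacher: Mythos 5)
Your argument is correct and follows essentially the same route as the paper's: for any $\mathcal{L}$-conjugate $c$ of $a\cdot b$ over $a,b,A$, show that $(a,b,a\cdot b,A)$ and $(a,b,c,A)$ are $H$-independent tuples with the same $\mathcal{L}$-type together with their common $H$-basis $HB(A)\subseteq A$, then apply Fact \ref{prop-H}(1) and $a\cdot b\in\mbox{dcl}_{\mathcal{L}_H}(a,b,A)$ to force $c=a\cdot b$. Two minor points: deducing $HB(a,b,a\cdot b,A)=HB(A)$ needs more than the unnumbered lemma plus transitivity, since the separate vanishing of $HB(a/A)$, $HB(b/A)$, $HB(a\cdot b/A)$ does not yield the joint statement and one must first extract $a\ind_{A,H(M)}b$ from $a\ind^{\!\!H}_A b$ as the paper does explicitly; and your exclusion of the case $c\in H(M)$ is correct but superfluous, since the $H$-independence of $(a,b,c,A)$ with $H$-basis $HB(A)$ already follows from $c\in\mbox{acl}(a,b,A)$ and $(a,b,A)\ind_{HB(A)}H(M)$.
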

\begin{proof}
By Fact \ref{prop-H} (3), $HB(a/A)=HB(b/A)=\emptyset$. That is $a\ind_{A}H(M)$ and $b\ind_{A}H(M)$.

By assumption, $a\ind^{\!\!H}_A b$. Hence, $a\ind_{A,H(M)}b$. Thus, $a\ind_{A,H(M)}bH(M)$. Together with $a\ind_{A}A,H(M)$, we get $a\ind_{A}b,H(M)$. Hence, $a,b\ind_{A,b}H(M)$. Again, as $b\ind_{A}H(M)$, we have $a,b\ind_{A}H(M)$. Since $A\ind_{HB(A)}H(M)$, we conclude that $a,b,A\ind_{HB(A)}H(M)$. Therefore, $HB(a,b,A)\subseteq HB(A)\subseteq A$.

As $c:=a\cdot b\in\mbox{acl}_{H}(a,b,A)=\mbox{acl}(a,b,A,HB(a,b,A))=\mbox{acl}(a,b,A)$, we have $$a,b,c,A\ind_{HB(A)}H(M).$$ Take $c'\in M$ with $\mbox{tp}(c'/a,b,A)=\mbox{tp}(c/a,b,A)$. As $c'\in\mbox{acl}(a,b,A)$, we still have $a,b,c',A\ind_{HB(A)}H(M).$  Therefore, $a,b,c,A$ and $a,b,c',A$ are $H$-independent tuples of the same $\mathcal{L}$-type. By Fact \ref{prop-H} (1), $\mbox{tp}_H(a,b,c'/A)=\mbox{tp}_H(a,b,c/A)$. As $c$ is in the $\mathcal{L}_H$-definable closure of $a,b,A$, we get $c'=c$. Hence, $c\in\mbox{dcl}(a,b,A)$ as we have claimed.

The proof of $a^{-1}\in\mbox{dcl}(a,A)$ is similar.
\end{proof}

\begin{lemma}\label{lem-indreal}
Let $(M,H(M))$ be a $\kappa$-saturated model of $T_H$. Let $G\subseteq M^n$ be an $\mathcal{L}_H$-type-definable group over $A$ with $\mbox{acl}_{H}(A)=A$ and $|A|<\kappa$. Then there are a partial $\mathcal{L}_H$-type $\pi_G(x)$ and a partial $\mathcal{L}$-type $\pi_{\mathcal{L}}(x)$ over $A$ such that:
\begin{enumerate}
\item
$\pi_G(M^n)$ is the set of all $\mathcal{L}_H$-generics in $G$.
\item
For any complete $\mathcal{L}$-type $q(x)$ over $A$ with $q(x)\supseteq \pi_{\mathcal{L}}(x)$, there is a complete $\mathcal{L}_H$-type $p(x)$ over $A$ such that $p(x)\supseteq q(x)\cup\pi_G(x)$;
\item
Let $a,b,c$ be three realizations of $\pi_\mathcal{L}(x)$ over $A$. Then there are $a',b',c'\in G$ such that $a',b',c'$ realise $\pi_G(x)$, $HB(a',b',c'/A)=\emptyset$ and $\mbox{tp}(a,b,c/A)=\mbox{tp}(a',b',c'/A)$. In addition, if $a,b,c$ are $\mathcal{L}$-independent, then $a',b',c'$ are $\mathcal{L}_H$-independent.
\end{enumerate}
\end{lemma}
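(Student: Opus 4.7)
The plan is to let $\pi_G(x)$ be the partial $\mathcal{L}_H$-type over $A$ describing the $\mathcal{L}_H$-generic elements of $G$, namely $x \in G$ together with ``$SU_H(x/A) \geq \alpha$'' for every $\alpha < SU_H(G)$; since $T_H$ is supersimple, each such rank condition is type-definable, so $\pi_G(x)$ is a genuine partial type. Let $\pi_\mathcal{L}(x)$ be the collection of $\mathcal{L}(A)$-formulas implied by $\pi_G(x)$. Condition (1) then holds by construction, and condition (2) follows from compactness: if some complete $\mathcal{L}$-type $q \supseteq \pi_\mathcal{L}$ were inconsistent with $\pi_G$, there would be $\psi \in q$ with $\pi_G \vdash \neg\psi$, forcing $\neg\psi \in \pi_\mathcal{L} \subseteq q$, a contradiction.

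The substantive content is (3). Given $(a,b,c) \models \pi_\mathcal{L}$ componentwise, let $q(x,y,z) := \mbox{tp}(a,b,c/A)$ and consider the partial $\mathcal{L}_H$-type over $A$
\[
\Lambda(x,y,z) := q(x,y,z) \cup \Theta(x,y,z),
\]
where $\Theta$ asserts $(x,y,z)\ind_A H(M)$, i.e. $HB(xyz/A)=\emptyset$; this is expressible as an $\mathcal{L}_H$-partial type over $A$ by universally quantifying ``$H(\bar w)$'' across all $\mathcal{L}$-formulas $\phi(x,y,z,\bar w,\bar a_0)$ (with $\bar a_0 \in A$) that would strictly lower the $\mathcal{L}$-dimension of $(x,y,z)$. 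To show $\Lambda$ is consistent I use $\kappa$-saturation of $(M,H(M))$ and verify finite consistency: a finite subtype mentions only a finite $A_0 \subseteq A$, and the generalised extension property applied over the finite-dimensional base $A_0$ yields a triple realising $\mbox{tp}(a,b,c/A_0)$ with $(x,y,z)\ind_{A_0}H(M)$, which suffices to witness the finite subtype.

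Fix such a realisation $(a',b',c')$ of $\Lambda$. Then $\mbox{tp}(a',b',c'/A) = q$ and $HB(a'b'c'/A) = \emptyset$, so each component has empty $HB$ over $A$ and realises $\pi_\mathcal{L}$. To upgrade ``realises $\pi_\mathcal{L}$ and has empty $HB$'' to ``realises $\pi_G$'', apply (2) to produce $a^* \models \pi_G$ with $\mbox{tp}(a^*/A) = \mbox{tp}(a'/A)$, and invoke a version of Fact \ref{prop-H}(1) relativised over $A$ (which is legitimate because $\mbox{acl}_H(A) = A$ makes $A$ itself $H$-independent): both $a'$ and $a^*$ are $H$-independent over $A$ with matching $\mathcal{L}$-type, so their $\mathcal{L}_H$-types over $A$ coincide, giving $a' \models \pi_G$. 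Repeating for $b'$ and $c'$ places $(a',b',c')$ in $G^3$ with each component realising $\pi_G$.

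For the last clause, suppose $a,b,c$ are $\mathcal{L}$-independent over $A$; then so are $a',b',c'$ since the $\mathcal{L}$-type is preserved. The $\mathcal{L}_H$-independence of $(a',b',c')$ over $A$ then follows from the characterisation recalled in the fact cited just before this lemma (from \cite[Proposition 5.6]{Berenstein2016}): each pairwise $\ind^H$-check reduces to verifying (i) non-identity (immediate from $\mathcal{L}$-independence), (ii) $\mathcal{L}$-independence over $A \cup H(M)$ (which follows by combining $\mathcal{L}$-independence over $A$ with joint $H$-independence via additivity of $\mathcal{L}$-dimension), and (iii) equality of $H$-bases (all empty, as joint $HB=\emptyset$ descends to every subtuple). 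The main obstacle is the consistency of $\Lambda$: encoding $HB=\emptyset$ as an $\mathcal{L}_H$-partial type and reducing to a finite-dimensional base to invoke the generalised extension property is the delicate step, together with the tuple-level relativisation of Fact \ref{prop-H}(1).
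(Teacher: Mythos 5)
Your proposal is correct and follows essentially the same route as the paper's proof: the paper likewise takes $\pi_G(x)$ to be the partial $\mathcal{L}_H$-type axiomatising the $\mathcal{L}_H$-generics of $G$ (closed under implication) with $\pi_{\mathcal{L}}$ its $\mathcal{L}$-reduct, proves item (2) by the identical compactness argument, obtains the copy $(a',b',c')$ by realising the given $\mathcal{L}$-type together with the condition $\ind_A H(M)$ using the extension axioms of $T_H$ and $\kappa$-saturation (the paper first replaces each tuple by an $\mbox{acl}$-basis plus an algebraic part, but this is the same computation your type $\Theta$ performs directly), and then upgrades ``realises $\pi_{\mathcal{L}}$ with empty $H$-basis'' to ``realises $\pi_G$'' exactly as you do, via item (2), Fact \ref{prop-H}(3) and Fact \ref{prop-H}(1) relativised over $A$, finishing the $\mathcal{L}_H$-independence claim by the same $H$-basis/forking bookkeeping. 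The one justification worth replacing is your claim that each condition $SU_H(x/A)\geq\alpha$ is type-definable (which is not clear for general supersimple theories): what is actually needed, and what the paper implicitly uses, is the standard fact that in a simple theory a type in a (type-)definable group is generic if and only if it contains no non-generic formula, so the set of $\mathcal{L}_H$-generics is type-definable over $A$.
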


\begin{proof}
Suppose $G$ is defined by a partial type $\delta(x)$. Let $\pi_G(x)$ be the partial $\mathcal{L}_H$-type over $A$ which contains $\delta(x)$ and is closed under implication such that for all $a\in M^n$, $a\models\pi_G(x)$ if and only if $a$ is $\mathcal{L}_H$-generic in $G$. Let $\pi_\mathcal{L}(x)\subseteq \pi_G(x)$ be the restriction of $\pi_G(x)$ in the language $\mathcal{L}$. 

Claim: Item 2 holds. If not, then there exists $\mathcal{L}$-type $q(x)$ over $A$ extending $\pi_{\mathcal{L}}(x)$ such that $q(x)\cup \pi_G(x)$ is inconsistent. By compactness, there is some $\psi(x)\in q(x)$ such that $\pi_G(x)\vdash \neg\psi(x)$. As $\pi_G(x)$ is closed under implication, $\neg\psi(x)\in \pi_G(x)$, hence also $\neg\psi(x)\in\pi_\mathcal{L}(x)$, which contradicts that $q(x)\supseteq\pi_{\mathcal{L}}(x)$. 

Now we prove item 3. Write $a=(a_1,a_2)$, $b=(b_1,b_2)$ and $c=(c_1,c_2)$, where $SU(a_1/A)=|a_1|$, $a_2\in \mbox{acl}(a_1,A)$; $SU(b_1/A,a)=|b_1|$, $b_2\in\mbox{acl}(b_1,a,A)$ and $SU(c_1/A,a,b)=|c_1|$, $c_2\in\mbox{acl}(c_1,a,b,A).$ (We remark that $b_1,c_1$ can be empty.) As $SU(a_1,b_1,c_1/A)=|a_1|+|b_1|+|c_1|$ and $T$ has $SU$-rank 1,  we get $a_1,b_1,c_1$ are $\mathcal{L}$-independent.  By the axioms of of $T_H$ and $\kappa$-saturation, there are $a_1',b_1',c_1'$ in $M$ such that $\mbox{tp}(a_1,b_1,c_1/A)=\mbox{tp}(a_1',b_1',c_1'/A)$ and $$a_1',b_1',c_1'\ind_{A} H(M).$$ Let $a_2',b_2',c_2'$ be such that $$\mbox{tp}(a_1',a_2',b_1',b_2',c_1',c_2'/A)=\mbox{tp}(a_1,a_2,b_1,b_2,c_1,c_2/A).$$ Define $a':=(a_1',a_2')$, $b':=(b_1',b_2')$ and $c':=(c_1',c_2')$. 

Since $a_1',b_1',c_1'\ind_{A}H(M)$ and $a',b',c'\in \mbox{acl}(a_1',b_1',c_1',A)$, we get $a',b',c'\ind_A H(M)$. Therefore, $HB(a',b',c'/A)=\emptyset$. Hence, $HB(a'/A)=HB(b'/A)=HB(c'/A)=\emptyset$.

We only need to show that $a',b'$ and $c'$ satisfy $\pi_G(x)$. Let $q(x):=\mbox{tp}(a/A)\supseteq \pi_{\mathcal{L}}(x)$. By item 2, there is a complete $\mathcal{L}_H$-type $p(x)$ over $A$ extending $q(x)\cup\pi_G(x)$. Let $a''$ be a realization of $p(x)$. By Fact \ref{prop-H} (3), $HB(a''/A)=\emptyset$. Therefore, both $a',A$ and $a'',A$ are $H$-independent and $$\mbox{tp}(a',A,HB(a',A))=\mbox{tp}(a,A,HB(A))=\mbox{tp}(a'',A,HB(a'',A)).$$ By Fact \ref{prop-H} (1), $\mbox{tp}_H(a'/A)=\mbox{tp}_H(a''/A)$. Hence $\mbox{tp}_H(a'/A)\supseteq \pi_G(x)$. Similarly, $b'$ and $c'$ are realizations of $\pi_G(x)$.

In addition, if $a,b,c$ are $\mathcal{L}$-independent, then $b'=(b_1',b_2')$ and $c'=(c_1',c_2')$ are such that $SU(b_1'/A)=SU(b_1'/A,a')=|b_1'|$, $SU(c_1'/A)=SU(c_1'/A,a',b')=|c_1'|$ and $b_2'\in\mbox{acl}(b_1',A)$, $c_2'\in\mbox{acl}(c_1',A)$. As $a_1',b_1',c_1'\ind_{A} H(M)$ and $a_1'\ind_{A}b_1',c_1'$, we get $$a_1'\ind_{A} b_1',c_1',H(M).$$ Therefore,  $a'\ind_{A}b',c',H(M)$, whence $a'\ind_{AH(M)}b',c',H(M)$. Together with $HB(a'/A)=HB(a'/Ab'c')=\emptyset$ we get $a'\ind^{\!\!H}_A b',c'$. The other $\mathcal{L}_H$-independences among $a',b',c'$ are similar. Hence, $a',b',c'$ are $\mathcal{L}_H$-independent.
\end{proof}

\begin{lemma}\label{lem-hyperdef}
Let $\mathcal{L}_0\subseteq\mathcal{L}_1$ be two languages. Let $M$ be an $\mathcal{L}_1$-structure. Suppose $Y$ is $\mathcal{L}_0$-hyper-definable and $G$ is $\mathcal{L}_1$-type-definable in $M$ such that there is an $\mathcal{L}_1$-isomorphism from $Y$ to $G$, then $Y$ is $\mathcal{L}_0$-type-interpretable.
\end{lemma}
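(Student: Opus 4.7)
The strategy is to unpack the hyper-definability of $Y$, lift the given $\mathcal{L}_1$-isomorphism to the real sorts, and then argue that the data needed to interpret $Y$ in $\mathcal{L}_0$ can be extracted from the $\mathcal{L}_0$-hyper-definable structure of $(Z,E)$ alone.

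First, write $Y=Z/E$, where $Z\subseteq M^m$ and $E\subseteq Z\times Z$ are both $\mathcal{L}_0$-type-definable, and let $\pi\colon Z\twoheadrightarrow Y$ be the quotient map. Composing with the $\mathcal{L}_1$-isomorphism $f\colon Y\to G$ yields an $\mathcal{L}_1$-type-definable surjection $\tilde f=f\circ\pi\colon Z\to G\subseteq M^n$ whose fibers are exactly the $E$-classes; let $\Phi(z,g)$ be the $\mathcal{L}_1$-partial type defining its graph. In particular $E(z_1,z_2)$ holds iff $\tilde f(z_1)=\tilde f(z_2)$, so $G$ serves as a real system of representatives for $Z/E$, but only witnessed in $\mathcal{L}_1$.

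Next, any $\mathcal{L}_1$-type-definable relation on $G^k$ (including whatever structure of $G$ is being transported to $Y$ along $f$) pulls back via $\tilde f$ to an $E^k$-invariant $\mathcal{L}_1$-type-definable relation $\Sigma$ on $Z^k$; the goal is to show each such $\Sigma$ is in fact $\mathcal{L}_0$-type-definable, exhibiting $Y$ (together with the transported data) as $\mathcal{L}_0$-type-interpretable. The concrete plan is a standard compactness argument: let $\Sigma_0$ be the partial $\mathcal{L}_0$-type consisting of all $\mathcal{L}_0$-formulas implied by $\Sigma$ relative to the partial types defining $Z$ and $E$. Clearly $\Sigma\vdash\Sigma_0$, and the content is to prove the reverse, so that $\Sigma$ and $\Sigma_0$ cut out the same subset of $Z^k$.

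The main obstacle is the non-trivial inclusion $\Sigma_0\subseteq\Sigma$: a priori $\Sigma_0$ could cut out strictly more tuples than $\Sigma$, because $\mathcal{L}_1$ is richer than $\mathcal{L}_0$. This is the point where the strength of the hypothesis must enter: $f$ is an actual isomorphism rather than a mere bijection, so the structure transported to $Y$ is canonically determined by the pair $(Y,G)$, and in a sufficiently saturated $M$ it should be invariant under $\mathcal{L}_0$-automorphisms fixing the hyper-definable data $(Z,E)$ setwise. Once this invariance is made precise, the usual saturation-plus-automorphism argument converts it into $\mathcal{L}_0$-type-definability of $\Sigma$, which yields the desired $\mathcal{L}_0$-type-interpretability of $Y$.
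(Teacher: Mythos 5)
Your setup (writing $Y=Z/E$ and forming $\tilde f=f\circ\pi\colon Z\to G$) matches the paper's, but from there the proposal does not engage with what the lemma actually asserts, and the step you yourself flag as the ``main obstacle'' is resolved only by appeal to an unproved invariance principle. The distinction between \emph{hyper-definable} and \emph{type-interpretable} here is that a hyper-definable set is a quotient $Z/E$ with $E$ merely \emph{type}-definable (an infinite conjunction of formulas), whereas a type-interpretable set is a quotient by a genuinely \emph{definable} equivalence relation. Since $Z$ and $E$ are already $\mathcal{L}_0$-type-definable by hypothesis, the entire content of the lemma is to replace $E$ by a single $\mathcal{L}_0$-\emph{formula} agreeing with $E$ on $Z$. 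Your proposal never addresses this; instead it sets out to show that pullbacks along $\tilde f$ of $\mathcal{L}_1$-type-definable relations are $\mathcal{L}_0$-type-definable, which is neither what is required nor, as sketched, provable: an automorphism of the reduct of $M$ to $\mathcal{L}_0$ fixing $(Z,E)$ setwise need not extend to an $\mathcal{L}_1$-automorphism, so there is no reason the pullback $\Sigma$ of $\mathcal{L}_1$-data should be invariant under it, and the ``usual saturation-plus-automorphism argument'' has no established invariance to act on.

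The argument the paper gives (and the idea missing from your proposal) is a chain of compactness extractions using only that the graph $\Phi$ of $\tilde f$ is $\mathcal{L}_1$-type-definable and functional into the \emph{type-definable} (not hyper-definable) set $G$. Functionality is an implication $\Phi(x,y)\wedge\Phi(x,y')\models y=y'$, so compactness yields a finite subconjunction $f_0$ of $\Phi$ that is already the graph of an $\mathcal{L}_1$-definable partial function. Then $R'(x,x')$, defined as $\exists g\,(f_0(x,g)\wedge f_0(x',g))$, is an $\mathcal{L}_1$-definable relation which coincides with $E$ on $Z\times Z$ because $\tilde f$ is well-defined and injective on $E$-classes. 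A second application of compactness to the inclusion $E=\bigcap_i E_i\subseteq R'\upharpoonright(Z\times Z)$ produces a finite subconjunction $E_{\mathcal{L}_0}$ of the $\mathcal{L}_0$-type defining $E$ with $E\subseteq E_{\mathcal{L}_0}\subseteq R'$ on $Z\times Z$, so $E_{\mathcal{L}_0}$ is an $\mathcal{L}_0$-formula agreeing with $E$ on $Z$; a final compactness step arranges that it is an equivalence relation on a definable superset of $Z$. This is where the hypothesis that $f$ is an $\mathcal{L}_1$-isomorphism onto a type-definable set is used --- to squeeze the type-definable $E$ between itself and a definable relation --- not to transfer auxiliary structure between the two languages.
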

\begin{proof}
Suppose $G=\bigcap_{i\in I}G_i$ is $\mathcal{L}_1$-type-definable, $Y=X/R$ where $X=\bigcap_{i\in I}X_i$ and $R=\bigcap_{i\in I}R_i$ are $\mathcal{L}_0$-type-definable and $\Phi(x,y):=\bigcap_{i\in I}\Phi_i:X_i\to G_i$ is $\mathcal{L}_1$-type-definable which induces an isomorphism between $Y$ and $G$.

As $\Phi$ is the graph of a function from $X$ to $G$, we have:
$$\bigwedge_{i,j,k\in I}X_i(x)\land G_j(y)\land G_j(y')\land \Phi_k(x,y)\land \Phi_k(x,y')\models y=y'.$$
By compactness, there are some $i_0,\ldots,i_k$ such that $$f(x,y):=\bigcap_{j\leq k}\Phi_{i_j}(x,y)~\subseteq~\left(\bigcap_{j\leq k}X_{i_j}\times \bigcap_{j\leq k}G_{i_j}\right)$$ is an $\mathcal{L}_1$-definable graph of a partial function.

Let $R'\subseteq \left(\bigcap_{ j\leq k}X_{i_j}\right)\times\left(\bigcap_{ j\leq k}X_{i_j}\right)$ be the $\mathcal{L}_1$-definable equivalence relation given by $R'(x,x')$ if and only if there is some $g\in \bigcap_{ j\leq k}G_{i_j}$ such that both $f(x,g)$ and $f(x',g)$ hold. We claim that $$R'\upharpoonright (X\times X)=R.$$ Let $x,x'\in X$. Suppose $R(x,x')$ holds. As $\Phi$ is an isomorphism between $Y$ and $G$, there is some $g\in G$ with $\Phi(x,g)$ and $\Phi(x',g)$. Therefore, both $f(x,g)$ and $f(x',g)$ hold and so does $R'(x,x')$. On the other hand, if $R'(x,x')$ holds, then there is $g\in \bigcap_{ j\leq k}G_{i_j}$ with $f(x,g)$ and $f(x',g)$. Let $g',g''\in G$ such that $\Phi(x,g')$ and $\Phi(x',g'')$. Thus, we also have $f(x,g')$ and $f(x',g'')$. Since $f$ is a partial function, $g=g'=g''$. Therefore, $R(x,x')$ holds.





As $R$ is defined by $\bigcap_{i\in I}R_i$, by compactness, there is some $\{j_0,\ldots,j_t\}\supseteq \{i_0,\ldots,i_k\}$ such that on $\left(\bigcap_{ i\leq t}X_{j_i}\right)\times\left(\bigcap_{ i\leq t}X_{j_i}\right)$ we have $$R_{\mathcal{L}_0}(x,x'):=\bigcap_{ i\leq t}R_{j_i}(x,x')\subseteq R'(x,x').$$ Thus, $R_{\mathcal{L}_0}$ is $\mathcal{L}_0$-definable and it agrees with $R$ on $X$. We have $$\begin{aligned}
\bigwedge_{i\in I}(X_i(x_1)\land X_i(x_2)\land X_i(x_3))\models R_{\mathcal{L}_0}(x_1,x_1)\\
\land(R_{\mathcal{L}_0}(x_1,x_2)\to R_{\mathcal{L}_0}(x_2,x_1))\\\land (R_{\mathcal{L}_0}(x_1,x_2)\land R_{\mathcal{L}_0}(x_2,x_3)\to R_{\mathcal{L}_0}(x_1,x_3)).
\end{aligned}$$
By compactness, there are $\{k_0,\ldots,k_m\}\supseteq\{j_0,\ldots,j_t\}$ such that $R_{\mathcal{L}_0}$ is an equivalence relation on $\bigcap_{t\leq m}X_{k_t}$. Therefore, $R$ is $\mathcal{L}_0$-definable. 
\end{proof}

We first consider $\mathcal{L}_H$-(type-)definable subgroups of $\mathcal{L}$-(type-)definable groups. We generalize Fact \ref{BV-gp1} to supersimple theories.

\begin{theorem}
Let $T$ be non-trivial of $SU$-rank one and let $(M,H(M))\models T_H$ be $\kappa$-saturated. Suppose $D$ is an $\mathcal{L}$-(type-)definable group and $G$ is an $\mathcal{L}_H$-(type-)definable subgroup of $D$, both defined over some set $A=\mbox{acl}_H(A)$ with $|A|<\kappa$. Then $G$ is $\mathcal{L}$-(type-) definable ovear $A$.
\end{theorem}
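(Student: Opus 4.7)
The plan is to build an $\mathcal{L}$-hyperdefinable ``shadow'' of $G$ using the group chunk theorem (Fact \ref{prop-gpchunk}) in the base theory $T$, then invoke Lemma \ref{lem-hyperdef} together with elimination of imaginaries to recover $G$ as an $\mathcal{L}$-type-definable subgroup of $D$. Throughout I write $\pi_{G},\pi_{\mathcal{L}}$ for the partial types from Lemma \ref{lem-indreal}.

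First, I use Lemma \ref{lem-dcl} (both halves) together with Lemma \ref{lem-indreal}(3) to transport the group operation of $G$ to a partial $\mathcal{L}$-type-definable operation $\star$ on pairs of $\mathcal{L}$-independent realisations of $\pi_{\mathcal{L}}$: given such $(a,b)$, I lift it to an $\mathcal{L}_{H}$-independent, $\mathcal{L}_{H}$-generic pair $(a',b')\in G^{2}$ of the same $\mathcal{L}$-type, and define $a\star b$ via the $\mathcal{L}$-$A$-definable function witnessing $a'\cdot b'\in\mbox{dcl}(a',b',A)$; Fact \ref{prop-H}(1) makes this independent of the lift. Using the group axioms in $G$ and further applications of Lemma \ref{lem-indreal}(3) (for three independent realisations), the triple $(\pi_{\mathcal{L}},\star)$ verifies generic independence, associativity, and surjectivity. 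Applying Fact \ref{prop-gpchunk} in $T$ yields an $\mathcal{L}$-hyperdefinable group $G^{*}$ over $A$ together with an $\mathcal{L}$-hyperdefinable bijection $\varphi$ from $\pi_{\mathcal{L}}$ onto the $\mathcal{L}$-generics of $G^{*}$, intertwining $\star$ with the group operation of $G^{*}$.

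Next, I promote $\varphi^{-1}$ to a group isomorphism $\Phi\colon G^{*}\to G$ by extending through products of generics: every element of $G^{*}$ decomposes as $a^{*}\cdot b^{*}$ for $\mathcal{L}$-independent generics $a^{*},b^{*}$ (Fact \ref{prop-XX} in $T$), while $\varphi^{-1}(a^{*}),\varphi^{-1}(b^{*})$ are $\mathcal{L}_{H}$-independent $\mathcal{L}_{H}$-generics in $G$; one sets $\Phi(a^{*}\cdot b^{*}):=\varphi^{-1}(a^{*})\cdot_{D}\varphi^{-1}(b^{*})$, with surjectivity onto $G$ supplied by Fact \ref{prop-XX} in $T_{H}$ (giving $G=\pi_{G}\cdot\pi_{G}$) and well-definedness/injectivity from multiplicativity of $\varphi$. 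Lemma \ref{lem-hyperdef} applied to $\Phi$ with $\mathcal{L}_{0}=\mathcal{L}$ and $\mathcal{L}_{1}=\mathcal{L}_{H}$ then forces $G^{*}$ to be $\mathcal{L}$-type-interpretable, and the standing elimination of imaginaries in $T$ (cf.\ the footnote to the statement) promotes this to $\mathcal{L}$-type-definability of $G^{*}$ in the home sorts.

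The final step is to see that the graph of $\Phi$ is an $\mathcal{L}$-type-definable subset of $G^{*}\times D$, from which $G=\Phi(G^{*})$ inherits $\mathcal{L}$-type-definability inside $D$. On generics $\Phi$ agrees with $\varphi^{-1}$, which is $\mathcal{L}$-type-definable after EI; globally the identity $\Phi(g^{*})=\varphi^{-1}(a^{*})\cdot_{D}\varphi^{-1}(b^{*})$ for any $\mathcal{L}$-independent generic factorisation $g^{*}=a^{*}\cdot b^{*}$ expresses $\Phi$ purely in $\mathcal{L}$-type-definable terms, since $\cdot_{D}$ is itself $\mathcal{L}$-definable. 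I expect this last identification---checking that the globally defined $\Phi$ has an $\mathcal{L}$-type-definable graph and that its image is exactly $G$ rather than a strictly larger $\mathcal{L}$-type-definable subgroup of $D$---to be the main technical obstacle, requiring careful bookkeeping of $\mathcal{L}$ versus $\mathcal{L}_{H}$ genericity and independence together with a compactness argument for the projection to $D$.
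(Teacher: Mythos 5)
Your overall strategy is a detour, and the place you yourself flag as ``the main technical obstacle'' is exactly the step the paper's proof is really about, so the proposal as written has a genuine gap. Because $G$ is assumed to sit inside an $\mathcal{L}$-(type-)definable group $D$, the group operation on $G$ is already the $\mathcal{L}$-definable operation of $D$; there is no need to transport it via Lemma \ref{lem-dcl}, no need for the group chunk theorem (Fact \ref{prop-gpchunk}), and no need for hyperimaginaries, Lemma \ref{lem-hyperdef} or elimination of imaginaries. The paper simply sets $X:=\pi_{\mathcal{L}}(M^n)\subseteq D$, uses Lemma \ref{lem-indreal} to check that $a^{-1}\cdot b\in X$ for $\mathcal{L}$-independent $a,b\in X$, and applies Fact \ref{prop-XX} \emph{in $T$, inside $D$} to get an $\mathcal{L}$-type-definable subgroup $D_G=X\cdot X$ of $D$. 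Your construction, once unwound, produces essentially this same set $X\cdot X$ (since $\varphi^{-1}$ carries the generics of $G^{*}$ onto $\pi_{\mathcal{L}}(M^n)$ and $\star$ is just $\cdot_D$), but packaged through an isomorphism with a hyperdefinable group, which only yields the weaker conclusion ``$G$ is isomorphic to an $\mathcal{L}$-interpretable group'' --- the statement of the paper's \emph{second} theorem, not this one.

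The genuine gap is the inclusion $X\cdot X\subseteq G$. Note that $\pi_{\mathcal{L}}(M^n)$ may be strictly larger than $\pi_G(M^n)$: it contains every element whose $\mathcal{L}$-type over $A$ extends $\pi_{\mathcal{L}}$, including elements with nonempty $H$-basis that are not $\mathcal{L}_H$-generic in $G$. So a priori $\Phi(G^{*})=X\cdot X$ could be a subgroup of $D$ strictly containing $G$, and nothing in your sketch rules this out. The paper closes this by taking an $\mathcal{L}_H$-generic $a$ of $D_G$, observing via Fact \ref{prop-H}(3) that $HB(a/A)=\emptyset$ and that $a$, being also $\mathcal{L}$-generic in $D_G$, lies in $X$; then Lemma \ref{lem-indreal} produces $a'\models\pi_G$ with $\mbox{tp}(a/A)=\mbox{tp}(a'/A)$ and $HB(a'/A)=\emptyset$, and Fact \ref{prop-H}(1) upgrades the equality of $\mathcal{L}$-types to $\mbox{tp}_H(a/A)=\mbox{tp}_H(a'/A)$, so $a\models\pi_G$. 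Hence every $\mathcal{L}_H$-generic of $D_G$ lies in $G$, giving $D_G\leq G$ and thus $G=D_G$. Without this argument (or an equivalent one) your proof does not establish that $G$ itself is $\mathcal{L}$-(type-)definable, only that it embeds into an $\mathcal{L}$-type-definable group.
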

\begin{proof}
Suppose $D\subseteq M^n$. Let $\pi_G(x)$ and $\pi_{\mathcal{L}}(x)$ be defined as in Lemma \ref{lem-indreal} with $|x|=n$. Suppose $D$ is defined by the partial $\mathcal{L}$-type $\chi(x)$. As $\pi_G(x)$ is closed under implication, $\pi_G(x)\supseteq\chi(x)$. Therefore, $\pi_{\mathcal{L}}(x)\supseteq\chi(x)$.

By Fact \ref{prop-XX}, $G=\pi_G(M^n)\cdot\pi_G(M^n)$. We will show that $\pi_{\mathcal{L}}(M^n)$ also satisfies the conditions of Fact \ref{prop-XX} in $T$.

Let $X:=\pi_{\mathcal{L}}(M^n)$. Since $\chi(x)\subseteq\pi_{\mathcal{L}}(x)$, we have $X\subseteq D$. Take two $\mathcal{L}$-independent realizations $a,b$ of $\pi_{\mathcal{L}}(x)$. By Lemma \ref{lem-indreal}, there are $a',b'$ both realising $\pi_G(x)$ such that $\mbox{tp}(a,b/A)=\mbox{tp}(a',b'/A)$ and $a'\ind_{A}^{\!\!H}b'$. Therefore, $(a')^{-1}\cdot b'$ is also generic in $G$, which implies $$\pi_\mathcal{L}(x)\subseteq \pi_G(x)\subseteq\mbox{tp}_H((a')^{-1}\cdot b'/A).$$ As $\mbox{tp}(a,b/A)=\mbox{tp}(a',b'/A)$ and group operations are $\mathcal{L}$-definable, we have $$\mbox{tp}(a^{-1}\cdot b/A)=\mbox{tp}((a')^{-1}\cdot b'/A).$$ Therefore, $\pi_{\mathcal{L}}(x)\subseteq \mbox{tp}(a^{-1}\cdot b/A)$, whence $a^{-1}\cdot b\in X$. By Fact \ref{prop-XX} we get an $\mathcal{L}$-type-definable group $D_G:=X\cdot X$ such that $X$ contains all $\mathcal{L}$-generics in $D_G$.

Clearly, $G\leq D_G$. Let $a$ be an $\mathcal{L}_H$-generic element in $D_G$. By Fact \ref{prop-H}(3), we have $HB(a/A)=\emptyset$.  Since $a$ is also $\mathcal{L}$-generic in $D_G$, we get $a\in X$. 
By Lemma \ref{lem-indreal} there is an $a'$ satisfying $\pi_G(x)$ such that $\mbox{tp}(a/A)=\mbox{tp}(a'/A)$. As $a'$ is $\mathcal{L}_H$-generic in $G$, $HB(a'/A)=\emptyset=HB(a/A)$. 
By Fact \ref{prop-H}(1), $\mbox{tp}_H(a'/A)=\mbox{tp}_H(a/A)$. Hence, $a$ realizes $\pi_G(x)$, i.e., $a$ is $\mathcal{L}_H$-generic in $G$. Therefore, every $\mathcal{L}_H$-generic element of $D_G$ is contained in $G$, whence $D_G\leq G$. We conclude that $G=D_G$. 

\end{proof}

Now we consider general $\mathcal{L}_H$-(type-)definable groups. The following is a generalization of Fact \ref{BV-gp2}.

\begin{theorem}
Let $T$ be supersimple of $SU$-rank one and $(M,H(M))\models T_H$ be $\kappa$-saturated. Let $G$ be an $\mathcal{L}_H$-(type-)definable group over a set $A=\mbox{acl}_H(A)$ of size less than $\kappa$. Then $G$ is $\mathcal{L}_H$-definably isomorphic to some $\mathcal{L}$-(type)-interpretable group. In particular, if $T$ eliminates imaginaries, then every $\mathcal{L}_H$-(type-)definable group is $\mathcal{L}_H$-definably isomorphic to some $\mathcal{L}$-(type-)definable group.
\end{theorem}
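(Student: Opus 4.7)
The plan is to reduce to the group chunk theorem (Fact \ref{prop-gpchunk}) applied in the reduct $\mathcal{L}$, using the partial type $\pi_{\mathcal{L}}$ produced by Lemma \ref{lem-indreal}. First I would define a partial $\mathcal{L}$-type-definable multiplication $\star$ on $\mathcal{L}$-independent pairs of realizations of $\pi_{\mathcal{L}}$ over $A$ by lifting: given $\mathcal{L}$-independent $a,b\models\pi_{\mathcal{L}}$, use Lemma \ref{lem-indreal}(3) to choose $a',b'\models \pi_G$ that are $\mathcal{L}_H$-independent with $HB(a',b'/A)=\emptyset$ and $\mbox{tp}(a,b/A)=\mbox{tp}(a',b'/A)$. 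Set $c':=a'\cdot_G b'$; Lemma \ref{lem-dcl} gives $c'\in\mbox{dcl}(a',b',A)$, and $c'$ is $\mathcal{L}_H$-generic in $G$. Declare $a\star b$ to be the unique element with $\mbox{tp}(a,b,a\star b/A)=\mbox{tp}(a',b',c'/A)$. Well-definedness follows from Fact \ref{prop-H}(1): any alternative lift $(a'',b'')$ is $H$-independent with empty $H$-basis and shares the same $\mathcal{L}$-type over $A$, hence the same $\mathcal{L}_H$-type, so $a''\cdot b''$ has the same $\mathcal{L}$-type over $A$ as $a'\cdot b'$.

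Next I would verify the three axioms of Fact \ref{prop-gpchunk} for $(\pi_{\mathcal{L}},\star)$ in $T$. For each axiom, one lifts an $\mathcal{L}$-independent triple of realizations of $\pi_{\mathcal{L}}$ to an $\mathcal{L}_H$-independent triple of $\mathcal{L}_H$-generics in $G$ via Lemma \ref{lem-indreal}(3), applies the corresponding property of the group multiplication in $G$ (genericity of products, associativity, and surjectivity), and projects back, noting that $\mathcal{L}_H$-independence of generics with vanishing $H$-basis descends to $\mathcal{L}$-independence. The group chunk theorem then yields an $\mathcal{L}$-hyperdefinable group $G'$ over $A$ together with an $\mathcal{L}$-hyperdefinable bijection $\phi$ from the set of realizations of $\pi_{\mathcal{L}}$ (modulo the appropriate equivalence) onto the $\mathcal{L}$-generic types of $G'$, carrying $\star$ to multiplication in $G'$.

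To transfer the result back to $G$, I would extend $\phi$ to an $\mathcal{L}_H$-definable group isomorphism $\Phi:G\to G'$. Any $g\in G$ can be written as $g=a\cdot_G b$ with $a,b$ $\mathcal{L}_H$-independent $\mathcal{L}_H$-generic in $G$ (choose $a$ generic and $\mathcal{L}_H$-independent from $g$ over $A$ using $\kappa$-saturation, and set $b:=a^{-1}g$), and I define $\Phi(g):=\phi(a)\cdot_{G'}\phi(b)$. The standard argument with a third generic independent from $a,b,a',b'$ shows this is independent of the chosen decomposition, and that $\Phi$ is a homomorphism. Its graph is $\mathcal{L}_H$-type-definable, and by saturation plus compactness it is actually $\mathcal{L}_H$-definable. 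Finally, applying Lemma \ref{lem-hyperdef} with $\mathcal{L}_0=\mathcal{L}$, $\mathcal{L}_1=\mathcal{L}_H$, $Y=G'$ and the $\mathcal{L}_H$-type-definable group $G$ via $\Phi^{-1}$ shows that $G'$ is $\mathcal{L}$-type-interpretable; the ``in particular'' clause follows since under elimination of imaginaries hyperdefinable sets are type-definable.

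The main obstacle I expect is the last transfer step: establishing that $\Phi$, defined on each $g\in G$ through a non-canonical decomposition as a product of independent generics, is both well-defined and cut out by a single $\mathcal{L}_H$-formula (rather than only an $\mathcal{L}_H$-type). This rests on the uniform cancellation argument in $G'$ and on the interplay between $\kappa$-saturation, Lemma \ref{lem-dcl}, and compactness to upgrade the resulting type-definable graph to a definable one.
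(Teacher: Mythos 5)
Your overall strategy is the paper's: lift $\mathcal{L}$-independent realizations of $\pi_{\mathcal{L}}$ to $\mathcal{L}_H$-independent generics of $G$ via Lemma \ref{lem-indreal}, push the multiplication down to a generic operation $\star$ on $\pi_{\mathcal{L}}$, apply the group chunk theorem (Fact \ref{prop-gpchunk}) in the reduct, and transfer back. However, you assert rather than prove the one property of $\star$ that Fact \ref{prop-gpchunk} actually requires, namely that $\star$ is a \emph{partial $\mathcal{L}$-type-definable function}. Declaring $a\star b$ to be ``the unique element whose $\mathcal{L}$-type over $A$ matches that of $a'\cdot b'$'' defines $\star$ pointwise and shows it is $\mbox{Aut}(M/A)$-invariant, but invariance is not type-definability. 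The paper closes this by a two-stage compactness argument: Lemma \ref{lem-dcl} gives $a'\cdot b'=f_{a',b'}(a',b')$ for $\mathcal{L}$-definable functions, finitely many $f_{a_i,b_i}$ cover $\pi^2_G$, and then a second compactness step shows that \emph{which} $f_{a_i,b_i}$ applies is decided by a finite set $\Delta$ of $\mathcal{L}(A)$-formulas, yielding $\mathcal{L}$-formulas $\psi_i$ that select the correct piece on $\pi^2_{\mathcal{L}}$. Without this (or an equivalent argument) the hypothesis of the group chunk theorem is not verified; you should supply it.

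The second, more serious gap is in the transfer back. You define $\Phi(g):=\phi(a)\cdot_{G'}\phi(b)$ from a decomposition $g=a\cdot b$ into independent generics --- the classical Weil-style extension, which is a legitimate alternative to the paper's slicker device of applying Fact \ref{prop-XX} to $X=\{(g,f(g)):g\models\pi_G\}$ inside $G\times G_f$ to obtain the graph of an isomorphism directly. But on either route you only get, a priori, an isomorphism of $G$ onto $G_f:=f(\pi_G(M^n))\cdot f(\pi_G(M^n))$, which is an $\mathcal{L}_H$-hyper-definable subgroup of the $\mathcal{L}$-hyper-definable group $D=G'$; your application of Lemma \ref{lem-hyperdef} needs the target to be $\mathcal{L}$-hyper-definable, i.e.\ you need $G_f=D$ (surjectivity of $\Phi$), and this does not come for free. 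The paper proves it by taking an $\mathcal{L}_H$-generic $a$ of $D$, noting $HB(a/A)=\emptyset$ and that $a$ is $\mathcal{L}$-generic in $D$, hence $f^{-1}(a)\models\pi_{\mathcal{L}}$ with $HB(f^{-1}(a)/A)=\emptyset$, and then invoking Lemma \ref{lem-indreal} together with Fact \ref{prop-H}(1) to conclude $f^{-1}(a)\models\pi_G$; so all $\mathcal{L}_H$-generics of $D$ lie in $f(\pi_G(M^n))$ and $D\leq G_f$. This step is entirely absent from your outline, and without it the theorem's conclusion (isomorphism onto an $\mathcal{L}$-interpretable group) is not reached. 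A smaller point: elimination of imaginaries upgrades \emph{interpretable} to definable, not hyper-definable to type-definable; the reduction from hyper-definable to interpretable is exactly what Lemma \ref{lem-hyperdef} provides and must precede the appeal to EI.
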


\begin{proof}
Suppose $G$ is type-definable. Let $\pi_G(x)$ and $\pi_{\mathcal{L}}(x)$ be defined as in Lemma \ref{lem-indreal}. In the following, we will extend $\mathcal{L}$-generically and $\mathcal{L}$-type-definably the group operation $\cdot$ of $G$ to $\star$ on $\pi_{\mathcal{L}}(x)$.

Let $\pi^2_G(x,y)\supseteq \pi_G(x)\cup \pi_G(y)$ be the partial $\mathcal{L}_H$-type over $A$ such that $a,b$ are $\mathcal{L}_H$-independent and $\mathcal{L}_H$-generic in $G$ over $A$ if and only if $(a,b)\models \pi^2_G(x,y)$ for any $a,b\in M^n$. For $(a,b)\models \pi^2_G(x,y)$, we have $a\cdot b\in \mbox{dcl}(a,b)$ by Lemma \ref{lem-dcl}. That is $a\cdot b=f_{a,b}(a,b)$  for some $\mathcal{L}$-definable function $f_{a,b}$ over $A$. Let $\mbox{dom}_{a,b}(x,y)$ be the $\mathcal{L}$-formula that defines the domain of the function $f_{a,b}$. Then define the $\mathcal{L}_H$-formula $$\varphi_{a,b}(x,y):=\mbox{dom}_{a,b}(x,y)\land x\cdot y=f_{a,b}(x,y).$$ Then we can see that $$\pi^2_G(x,y)\subseteq \bigcup_{(a,b)\models \pi_G^2(x,y)}\varphi_{a,b}(x,y).$$ By compactness, there are $(a_1,b_1),(a_2,b_2),\ldots,(a_k,b_k)$ such that $$\pi^2_G(x,y)\models \bigvee_{1\leq i\leq k}\varphi_{a_i,b_i}(x,y).$$

Let $(a,b)$, $(c,d)$ be two pairs of realizations of $\pi^2_G(x,y)$ such that $\mbox{tp}(a,b/A)=\mbox{tp}(c,d/A)$. Note that $(a,b)$ is an $\mathcal{L}_H$-generic element in $G\times G$. By Fact \ref{prop-H}(3), $HB(a,b/A)=\emptyset$. Similarly, $HB(c,d/A)=\emptyset$. Applying Fact \ref{prop-H}(1), we get $\mbox{tp}_{H}(a,b/A)=\mbox{tp}_{H}(c,d/A)$. Therefore, $(M,H(M))\models\varphi_{a_i,b_i}(a,b)\leftrightarrow \varphi_{a_i,b_i}(c,d)$ for all $1\leq i\leq k$. The above argument shows: 
$$\pi_G^2(x,y)\land\pi_G^2(x',y')\land\bigwedge_{\psi\in\mathcal{L}(A)}\psi(x,y)\leftrightarrow\psi(x',y')\models \bigwedge_{1\leq i\leq n}(\varphi_{a_i,b_i}(x,y)\leftrightarrow \varphi_{a_i,b_i}(x',y')).$$

By compactness, there is some finite set of $\mathcal{L}(A)$ formulas $\Delta$ such that the $\Delta$-type of any pair $(a,b)\models \pi^2_G(x,y)$  determines $(a,b)\models\varphi_{a_i,b_i}(x,y)$ or $(a,b)\models\neg\varphi_{a_i,b_i}(x,y)$ for any $1\leq i\leq k$. Hence, there are $\mathcal{L}$-formulas $\psi_1(x,y),\ldots,\psi_k(x,y)$ such that $$\pi^2_G(x,y)\models \bigvee_{1\leq i\leq k}\psi_i(x,y)$$ and for any $1\leq i\leq k$, we have $$ \pi^2_G(x,y)\models \psi_i(x,y)\to \left(\varphi_{a_i,b_i}(x,y)\land \bigwedge_{1\leq j<i}\neg\varphi_{a_j,b_j}(x,y)\right).$$

Let $\pi^2_{\mathcal{L}}(x,y)\supseteq \pi_{\mathcal{L}}(x)\cup\pi_\mathcal{L}(y)$ be the partial $\mathcal{L}$-type over $A$ such that $(a,b)\models \pi^2_{\mathcal{L}}(x,y)$ if and only if $a,b$ are $\mathcal{L}$-independent over $A$. By Lemma \ref{lem-indreal}, for $(a,b)\models \pi^2_{\mathcal{L}}(x,y)$, there are $a',b'$ realizing $\pi_{G}(x)$ such that $a'\ind^{\!\!H}_A b'$ and $\mbox{tp}(a,b/A)=\mbox{tp}(a',b'/A)$. Note that $(a',b')\models \pi^2_G(x,y)$. Hence, $$(a',b')\models \psi_i(x,y)\land \varphi_{a_i,b_i}(x,y)\land \bigwedge_{1\leq j<i}\neg\varphi_{a_j,b_j}(x,y)$$ for some $1\leq i\leq k$. As $\mbox{tp}(a,b/A)=\mbox{tp}(a',b'/A)$, we also have $$(a,b)\models \psi_i(x,y)\land \mbox{dom}_{a_i,b_i}(x,y).$$ Define $a\star b:=f_{a_i,b_i}(a,b)$. As $f_{a_i,b_i}(a',b')\models \pi_{\mathcal{L}}(x)$ and $\mbox{tp}(a,b/A)=\mbox{tp}(a',b'/A)$, we also have $f_{a_i,b_i}(a,b)\models \pi_{\mathcal{L}}(x)$. Note that $a\star b$ is defined by $f_{a_i,b_i}(x,y)$ if and only if $(a,b)\models \psi_i(x,y)$. Hence, $\star
$ is an $\mathcal{L}$-type-definable function from $\pi^2_{\mathcal{L}}(M^n,M^n)$ to $\pi_{\mathcal{L}}(M^n)$ and $\star$ agrees with $\cdot$ on $\pi^2_G(M^n,M^n)$.

We now verify all the conditions of the group chunk theorem (Fact \ref{prop-gpchunk}) in order to obtain an $\mathcal{L}$-hyper-definable group out of the generically given group operation.

\begin{lemma}
The $\mathcal{L}$-type-definable function $\star:\pi^2_{\mathcal{L}}(M^n,M^n)\to\pi_{\mathcal{L}}(M^n)$ satisfies all the conditions in Fact \ref{prop-gpchunk}.
\end{lemma}
\begin{proof}
Generic independence: Let $a,b$ be $\mathcal{L}$-independent realizations of $\pi_{\mathcal{L}}(x)$ and $c:=a\star b$. Then there are $\mathcal{L}_H$-independent and $\mathcal{L}_H$-generic elements $a',b'$ over $A$ such that $\mbox{tp}(a',b'/A)=\mbox{tp}(a,b/A)$. Let $c':=a'\cdot b'$. Since $\star$ is $\mathcal{L}$-definable and agrees with $\cdot$ on $\pi^2_G(M^n,M^n)$, we get $c'=a'\star b'$. Therefore, $\mbox{tp}(a',b',c'/A)=\mbox{tp}(a,b,c/A)$. As $c'\ind^{\!\!H}_A a'$, we have $c'\ind_A a'$. Hence, we also have $c\ind_A a$. Similarly, $c\ind_A b$.

Generic associativity: Let $a,b,c$ be $\mathcal{L}$-independent realizations of $\pi_{\mathcal{L}}(x)$. By Lemma $\ref{lem-indreal}$, there are $\mathcal{L}_H$-generic and $\mathcal{L}_H$-independent realizations $a',b',c'$ such that $$\mbox{tp}(a,b,c/A)=\mbox{tp}(a',b',c'/A).$$ Now we have $$\mbox{tp}((a\star b)\star c),a\star(b\star c))=\mbox{tp}((a'\star b')\star c',a'\star(b'\star c'))=\mbox{tp}((a'\cdot b')\cdot c',a'\cdot(b'\cdot c')).$$
Since $(a'\cdot b')\cdot c'=a'\cdot(b'\cdot c')$ we get $(a\star b)\star c=a\star(b\star c)$.


Generic surjectivity: for any $\mathcal{L}$-independent realizations $a,b$ of $\pi_{\mathcal{L}}(x)$, there are $\mathcal{L}_H$-independent realizations $a',b'$ of $\pi_G(x)$ such that $\mbox{tp}(a,b/A)=\mbox{tp}(a',b'/A)$. Let $c':=(a')^{-1}\cdot b'$. Then $c'$ is $\mathcal{L}_H$-independent from $a'$ and from $b'$. By Lemma \ref{lem-dcl}, $c'\in \mbox{dcl}((a')^{-1},b',A)=\mbox{dcl}(a',b',A)$. Let $c$ be the element with $\mbox{tp}(a,b,c/A)=\mbox{tp}(a',b',c'/A)$. Clearly, $c$ realizes $\pi_{\mathcal{L}}(x)$ and is $\mathcal{L}$-independent from $a$ and from $b$. Since $a'\cdot c'=a'\star c'=b'$ and $\mbox{tp}(a,b,c/A)=\mbox{tp}(a',b',c'/A)$, we have $a\star c=b$. Similarly, we can find $c''$ realizing $\pi_{\mathcal{L}}(x)$, $\mathcal{L}$-independent from $a$ and from $b$ such that $c''\star a=b$.
\end{proof}

By Fact \ref{prop-gpchunk}, there are an $\mathcal{L}$-hyper-definable group $D$ over $A$, and an $\mathcal{L}$-type-definable embedding $f:\pi_{\mathcal{L}}(M^n)\to D$ over $A$ such that $f(\pi_{\mathcal{L}}(M^n))$ contains all $\mathcal{L}$-generics of $D$. 

Consider $f(\pi_G(M^n))\subseteq D$. Take $g,g'$ $\mathcal{L}_H$-independent elements in $f(\pi_G(M^n))$. Suppose $g=f(a)$ and $g'=f(b)$. As $f$ is an $\mathcal{L}_H$-definable injection, we get $a\ind^{\!\! H}_A b$. Hence, $a^{-1}\star b\models\pi_G(x)$ and $a\ind^{\!\!H}_A a^{-1}\star b$. Since $f$ preserves $\star$ generically and $a,a^{-1},b\in G$, we have $$f(a)\cdot f(a^{-1}\star b)=f(a\star (a^{-1}\star b))=f(a\cdot(a^{-1}\cdot b))=f(b).$$ Hence, $f(a)^{-1}\cdot f(b)=f(a^{-1}\star b)\in f(\pi_G(M^n))$. By Fact \ref{prop-XX}, $$G_f:=f(\pi_G(M^n))\cdot f(\pi_G(M^n))$$ is an $\mathcal{L}_H$-hyper-definable group, and $f(\pi_G(x))$ contains all $\mathcal{L}_H$-generics in $G_f$. 

Let $X:=\{(g,f(g)):g\models \pi_G(x)\}\subseteq G\times G_f$. Let $(g_1,f(g_1))$ and $(g_2,f(g_2))$ be $\mathcal{L}_H$-independent tuples in $X$. Consider $$x_{g_1,g_2}:=(g_1,f(g_1))^{-1}\cdot (g_2,f(g_2))=(g_1^{-1},f(g_1^{-1}))\cdot(g_2,f(g_2))=(g_1^{-1}\star g_2,f(g_1^{-1}\star g_2)).$$ As $g_1\ind^{\!\! H}_A g_2$ in $\pi_G(x)$ we get $g_1^{-1}\star g_2=g_1^{-1}\cdot g_2\in \pi_G(x)$. Therefore, $x_{g_1,g_2}\in X$. By Fact \ref{prop-XX}, $C:=X\cdot X$ is a subgroup of $G\times G_f$. Consider the projection $\rho_1(C)\leq G$. It contains $\pi_G(M^n)$, hence contains all $\mathcal{L}_H$-generics of $G$. Thus $\rho_1(C)=G$. Similarly, $\rho_2(C)=G_f$. Let $I:=\{g:(g,\mbox{1})\in C\}$ and $I':=\{g:(\mbox{1},g)\in C\}$. If $g\in I$, then there are $g_1,g_2\in \pi_G(M^n)$ such that $g=g_1\star g_2$ and $f(g_1)\cdot f(g_2)=f(g_1\star g_2)=1$. As $f$ is an embedding, we get $g_1\star g_2=\mbox{1}$. Therefore, $I=\{\mbox{1}\}$. Similarly, $I'=\{\mbox{1}\}$. Hence, $C$ is the graph a group isomorphism between $G$ and $G_f$.

Let $a$ be an $\mathcal{L}_H$-generic in $D$. Then $HB(a/A)=\emptyset$. Since $a$ is also $\mathcal{L}$-generic in $D$, we get that $f^{-1}(a)$ satisfies $\pi_{\mathcal{L}}(x)$. As $f$ is an $\mathcal{L}_H$-definable embedding, we have 
$HB(f^{-1}(a)/A)=\emptyset$. 
Since $f^{-1}(a)\models \pi_{\mathcal{L}}(x)$, by Lemma \ref{lem-indreal} there is $a'$ realizing $\pi_G(x)$ such that $a'$ and $f^{-1}(a)$ have the same $\mathcal{L}$-type over $A$. Note that $HB(a'/A)=\emptyset$. By Fact \ref{prop-H} (1), $\mbox{tp}_H(a'/A)=\mbox{tp}_H(f^{-1}(a)/A)$. Hence, $f^{-1}(a)$ realizes $\pi_G(x)$, and $a=f(f^{-1}(a))$ is $\mathcal{L}_H$-generic in $G_f$. Therefore, the set of $\mathcal{L}_H$-generics of $D$ is contained in $G_f$, whence $D\leq G_f$. Together with $G_f\leq D$, we get $G_f=D$ and $G$ is $\mathcal{L}_H$-type-definably isomorphic to $D$.

Now Lemma \ref{lem-hyperdef} implies that $D$ is $\mathcal{L}$-type-interpretable.

Suppose $D=D_G/E$ where $E$ is an $\mathcal{L}$-definable equivalence relation and $D_G$ is $\mathcal{L}$-type-definable. If $G$ is definable, then $D_G$ is the image of an $\mathcal{L}_H$-definable function, hence $\mathcal{L}_H$-definable. By compactness $D_G$ is $\mathcal{L}$-definable. Therefore, $G$ is $\mathcal{L}_H$-definably isomorphic to an $\mathcal{L}$-interpretable group $D$.
\end{proof}

\noindent\emph{Remark:} Given an $\mathcal{L}_H$-definable group $G$, without the assumption that $G$ lives inside an $\mathcal{L}$-definable group, we cannot generally have that $G$ is $\mathcal{L}$-definable. Here is an example. 

\begin{example}
Let $D=(D,\cdot,{}^{-1})$ be a group without involutions of $SU$-rank one in the language $\mathcal{L}=\{\cdot, {}^{-1}\}$. Let $(D,H(D))$ be an $H$-structure. 

Define $\sigma:D\to D$ as $\sigma(x)=x$ if $x\not\in H(D)\cup (H(D))^{-1}$; and $\sigma(x)=x^{-1}$ if $x\in H(D)\cup (H(D))^{-1}$. Let $\star:G\times G\to G$ be defined as $a\star b:=\sigma^{-1}(\sigma(a)\cdot\sigma(b))$. Then the group $(D,\star,{}^{-1})$ is $\mathcal{L}_H$-isomorphic to $(D,\cdot,{}^{-1})$ via $\sigma$, but not $\mathcal{L}$-definable.
\end{example}

\renewcommand{\abstractname}{Acknowledgements}
\begin{abstract}
The author wants to thank her supervisor Frank Wagner for proposing the interesting question that initiated this project, for contributing at least half of the ideas of this paper. She also wants to thank the referee for the valuable comments and corrections. She is thankful to Gareth Boxall for suggesting looking at $H$-structures; to Jan Dobrowolski and Dugald Macpherson for pointing out that the first main result of this paper generalises from pseudofinite fields to one-dimensional asymptotic classes.
\end{abstract}

\label{Bibliography}

\bibliographystyle{plain} 
\bibliography{dim1.bib}

\end{document}